\newcommand{\R}{\mathbb{R}}
\newcommand{\derpart}[2]{ \frac{\partial #1}{\partial #2} } 
\newcommand{\der}[2]{ \frac{\text{d} #1}{\text{d} #2} }  
\theoremstyle{plain}
\newtheorem{theorem}{Theorem}
\newtheorem{prop}{Proposition}
\newtheorem{lemma}{Lemma}
\newtheorem{rem}{Remark}
\newcommand{\eps}{\varepsilon}
\definecolor{orange}{rgb}{1.00,0.50,0.0}
\begin{document}

\title{Canard explosion in delayed equations with multiple timescales 
}


\author{Maciej Krupa\footnotemark[1]  \and Jonathan D. Touboul\footnotemark[1] \footnotemark[4]}
\renewcommand{\thefootnote}{\fnsymbol{footnote}}
\footnotetext[1]{MYCENAE Laboratory, Inria Paris-Rocquencourt, fistname.lastname@inria.fr}
\footnotetext[4]{The Mathematical Neurosciences Laboratory, Center for Interdisciplinary Research in Biology (CNRS UMR 7241, INSERM U1050, UPMC ED 158, MEMOLIFE PSL*)}

\renewcommand{\thefootnote}{\arabic{footnote}}
\date{\today}

\maketitle

\noindent \textbf{Abstract}	
We analyze canard explosions in delayed differential equations with a one-dimensional slow manifold. This study is applied to explore the dynamics of the van der Pol slow-fast system with delayed self-coupling. In the absence of delays, this system provides a canonical example of a canard explosion.  We show that as the delay is increased a family of `classical' canard explosions ends as a Bogdanov-Takens bifurcation occurs at the folds points of the S-shaped critical manifold.
\medskip

\noindent \textbf{Keywords} Delayed Differential Equations; Slow-Fast systems; Canard Explosion

\setcounter{secnumdepth}{2}
\setcounter{tocdepth}{2}

\bigskip
\bigskip
\hrule
\tableofcontents
\hrule

\section*{Introduction}
Nonlinear dynamical systems with multiple timescales and delays are essential in applications. For instance, realistic models of neuronal dynamics accounting for the dynamics of neuronal areas involve several excitable elements, whose dynamics occur on very different timescales, interacting after delays due to the transmission of information through synapses. Similar problems arise in different domains, including mechanical systems~\cite{campbell-etal:09}, macroscopic phenomena arising in chemistry, physics or social science. Such nonlinear systems involving multiple timescale dynamics and delays generally display a rich phenomenology, and particularly a wide repertoire of complex periodic behaviors. Slow-fast systems have been chiefly analyzed in finite-dimensional contexts. The topic of the present paper is to analyze the role of delays in dynamics of slow-fast systems.

Slow-fast system have attracted a lot of attention from theoreticians and applied mathematicians. One phenomenon of particular interest in such systems is the so-called canard explosion, that describes a very fast transition, upon variation of a parameter, from a small amplitude limit cycle to a \emph{relaxation oscillation}, type of periodic solution consisting of long periods of quasi static behaviors interspersed with short periods of rapid transitions. These oscillations are ubiquitous in systems modeling chemical or biological phenomena~\cite{grasman:87,lagerstrom:88}. Canards were first studied about thirty years ago~\cite{benoit-etal:81} in the context the van der Pol (vdP) equation with constant forcing. The authors showed that close to a Hopf bifurcation in this system, a small change of the forcing parameter leads to such a fast transition from small amplitude limit cycles to large amplitude relaxation cycles. This canard explosion happens within an exponentially small range of the control parameter. These phenomena generically arise in two-dimensional dynamical systems~\cite{krupa-szmolyan:01}. In higher dimensional systems with multiple timescales, more complex oscillatory patterns may arise. Two examples are given by the so-called Mixed Mode Oscillations (MMO)~\cite{desroches:12} and bursting~\cite{rinzel-ermentrout:98}. Here, we show how to extend the theory of canard explosions to the setting of delayed equations. In a companion paper~\cite{krupa-touboul:14b}, we complete this  characterization by investigating theoretically and numerically the emergence of mixed-mode oscillations and bursting. 

The problem of canard explosions in delayed equations was first addressed by Campbell \emph{et al}~\cite{campbell-etal:09} for the analysis of a model of controlled drilling in the limit of small delays. In this regime, using the property that such systems present a two-dimensional inertial manifold~\cite{chicone:03}, they propose a two-dimensional ODE representation of the infinite-dimensional delayed system in the regime of small delays. This allows them to use the standard analysis of canards explosions in two dimensions and obtain a picture consistent with simulations of the original delayed system. Here, we do not restrict our analysis to small delays, and therefore reduction to a small dimensional ODE is no more possible. In this general case, canard explosions persist. Indeed, as shown in~\cite{krupa-szmolyan:01}, the generic mechanism of canard explosions in two dimensions relies upon Fenichel theory~\cite{fenichel:79}, the existence of connections in the fast subsystem, and the analysis of trajectories near non-hyperbolic points (fold points and canard points). For higher dimensional problem, center manifold reduction near fold points and canard points is necessary. Such elements are found in general delayed systems with one-dimensional slow manifold, as is the case of our system of interest, the self-coupled delayed van der Pol oscillator. In particular, Fenichel theorem for advance and delayed equations has been developed in~\cite{hupkes-standstede:10}, center manifold theory is classical~\cite{diekmann1995delay,faria-magalhaes:95,campbell2009calculating}. We show that these elements lead to the presence of canard explosions in delayed equations. 

We apply this theory to the delayed self-coupled vdP oscillator, and show the existence of canard explosions. In a companion paper~\cite{krupa-touboul:14b}, we show that beyond canard explosion, the system presents a richer dynamics than the sole canard explosion. Indeed, the fast dynamics is described by a one-dimensional delayed equation manifesting highly non-trivial dynamics and yielding, in the slow-fast system, complex oscillatory patterns including small cycles, relaxation cycles, MMOs, bursting and chaos. 

The paper is organized as follows. In section~\ref{sec:Theory} we present some general results on canard explosion for a class of delay differential equations. In section~\ref{sec:CanardvdP} we apply these results to the retarded van der Pol system. Appendix~\ref{append:fenichel} gives an overview of Fenichel's theory in the setting of our problem. Finally, appendix~\ref{append:Campbell} reviews the results of Campbell \emph{et al}~\cite{campbell-etal:09} in the case of small delays. 


\section{General Theory}\label{sec:Theory}
The theory of canard explosion developed for finite-dimensional ordinary differential equations, relies on three main ingredients: the existence and persistence of slow manifolds (Fenichel theory), a center manifold theory (generally near a fold point or a more degenerate point, e.g. Bogdanov-Takens point) and the existence and persistence of connections in the fast system. In this section, we show that all these elements persist in the infinite-dimensional setting of delayed differential equations, and that together, they yield canard transitions. 

\subsection{Fenichel theorem for delayed equation}
The Fenichel theorem \cite{fenichel:79} proves the existence of slow manifolds and their persistence in slow-fast dynamical systems. While the original result holds for finite-dimensional equations, much effort has been devoted to extend this theory to infinite dimensional systems. In particular, the theorem has recently been extended to the context of advanced-retarded
equations (more general than studied here) by Hupkes and Sandstede \cite{hupkes-standstede:10}. For such equations, the absence of a semiflow due to the advance term prevents from using classical analysis based on semi-groups~\cite{diekmann1995delay}. Their result of applies to the case of delayed differential equations. However, in that simpler case, we can use usual methods of spectral decomposition of the system and the semiflow of the equation, therefore readily extending the more classical techniques, developed in~\cite{diekmann1991center,diekmann1995delay}, in order to prove Fenichel theory. This is why we state here a result which is convenient for our purposes, and include a sketch of the proof based on these methods in appendix~\ref{append:fenichel}.

Let $X= C([-h, 0],\R^n)$. We consider the following delay equation:
\begin{align}\label{eq-delsf}
\begin{split}
x_t'=& \int_0^h d\zeta(y_t, \tau) x_{t-\tau} + f(x_t, y_t, \eps)\\
y_t'=&\eps g(x_t, y_t),
\end{split}
\end{align}
with $x\in\R^n$, $y\in\R$, $f\; :\;X\times \R\times \R_+\to\R^n$,
$f(0,y,0)=0$ and $D_1f(0,y,0)=0$. We denote by $x^t$ the element of $X$ corresponding to the map $x^t(\theta)=x_{t+\theta}$ for $\theta \in [-\tau,0]$. The above equation is classically written as a dynamical system in terms of the variable $x^t$ taking values in $X$.
The fast subsystem, obtained by setting $\eps=0$ in  \eqref{eq-delsf}, is given by
\begin{align}\label{eq-delf}
\begin{split}
x'=& \int_0^h d\zeta(y, \lambda, \tau) x_{t-\tau} + f(x_t, y,\lambda),
\end{split}
\end{align}
with $y$ playing the role of a parameter. The set of equilibria of \eqref{eq-delsf} parametrized by $y$ is known as the critical
manifold. We denote this set by $S$. Suppose that a segment of $S$ can be represented as a graph of a function $\phi\; :\; [y_1, y_2] \to \R^n$.
Then we can translate this segment of $S$ to the origin. Hence we can assume that  
$x=0$ is a solution of \eqref{eq-delf}, referred to as the \emph{trivial equilibrium}. Moreover, we can include the linear part of $f$ at $0$ in the term containing the integral.
As a result of this rearrangement  we have $f=O(|x|^2)$. 
The associated \emph{dispersion relationship} at the trivial equilibrium (obtained by linearization of~\eqref{eq-delsf} and evaluation on exponential functions with parameter $\lambda$) reads:
\begin{equation}\label{eq-defdelta}
\Delta(y,\lambda)=\lambda I-\int_0^h d\zeta(y, \tau)e^{-\lambda\tau}.
\end{equation}
Characteristic exponents governing the stability of the trivial solution are the values of $\lambda$ such that $\Delta(y,\lambda)=0$. 

Now that these elements have been introduced, we state a generalization of the Fenichel theorem \cite{fenichel:79} to the context of delay equations of the form
\eqref{eq-delsf}. As mentioned this result has been proved in much larger generality in \cite{hupkes-standstede:10}.
Our proof may be easy to follow due to the relatively simple setting and more `classical' approach. Moreover, the proof is used further 
in the paper as a basis for the proof of the theorem on canard explosion (more specifically, to prove the forthcoming Lemma~\ref{lem-exten}).

\begin{theorem}[Hupkes \& Standstede~\cite{hupkes-standstede:10}]\label{thm-fen} 
Suppose that there exist $y_1< y_2$ such that the characteristic roots for $y\in [y_1,y_2]$ (i.e., solutions of $\Delta(y,\lambda)=0$) are not on the imaginary axis. 
Then, for $\eps>0$ sufficiently small, there exist:
\renewcommand{\theenumi}{(\roman{enumi})}
\begin{enumerate}
	\item a slow manifold $S_\eps$ of the form $x=\phi(y)$, $y\in [y_1, y_2]$,
	satisfying $\phi=O(\eps)$,
	\item a finite dimensional unstable manifold $W^u_\eps$ consisting of all the solutions that are exponentially 
 repelled from $S_\eps$. Any solution starting close to $S_\eps$ becomes $O(e^{-c/\eps})$ close to $W^u_\eps$ before leaving a small neighborhood of $S_\eps$. 
\end{enumerate}
\end{theorem}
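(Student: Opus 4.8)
The plan is to reduce the construction to a Lyapunov--Perron fixed-point argument in the phase space $X$, exploiting the spectral decomposition available for linear delay equations. First I would linearize the fast subsystem \eqref{eq-delf} at the trivial equilibrium and invoke the classical spectral theory of delay equations (as in \cite{diekmann1995delay}): since the characteristic roots (solutions of $\Delta(y,\lambda)=0$) avoid the imaginary axis for every $y\in[y_1,y_2]$, the generator of the associated $C_0$-semigroup has its spectrum split into a part with negative real part and finitely many eigenvalues with positive real part. This yields a $y$-dependent splitting $X = X^s_y \oplus X^u_y$ with $\dim X^u_y = k < \infty$ constant on $[y_1,y_2]$ (the dimension cannot jump because no root crosses $i\R$ by hypothesis), together with spectral projections $P^s_y, P^u_y$ depending smoothly on $y$ with uniformly bounded norms. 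On $X^s_y$ the linear semigroup contracts like $e^{-\alpha t}$ and on $X^u_y$ it expands like $e^{\beta t}$, with $\alpha,\beta>0$ uniform in $y$; this is the normal hyperbolicity that replaces the finite-dimensional hypothesis of \cite{fenichel:79}.

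Next I would write \eqref{eq-delsf} as a perturbation problem. Setting $u = x^t\in X$ and keeping $y$ as a slow variable with $y' = \eps g$, the system takes the form $u' = A(y)u + F(u,y,\eps)$, $y' = \eps g(u,y)$, where $F = O(\|u\|^2 + \eps)$ once the linear part of $f$ is absorbed into the integral term as in the text and $\Delta$ is redefined as in \eqref{eq-defdelta}. To build $S_\eps$ I would seek it as a graph $u = \phi_\eps(y)$ and impose invariance: along the flow $\phi_\eps$ must solve $D\phi_\eps(y)\,\eps\, g(\phi_\eps(y),y) = A(y)\phi_\eps(y) + F(\phi_\eps(y),y,\eps)$. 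I would recast this via the sun--star (variation-of-constants) formulation of delay equations into a fixed-point equation for $\phi_\eps$ in a space of bounded $C^0$, then $C^1$, maps $[y_1,y_2]\to X$ of small norm; the contraction constant is controlled by $\eps/\alpha$ and $\eps/\beta$, so for $\eps$ small there is a unique fixed point, and the quadratic-plus-$\eps$ nature of $F$ forces $\phi_\eps = O(\eps)$. The unstable manifold $W^u_\eps$ is obtained by the same Lyapunov--Perron scheme applied on the finite-dimensional bundle $X^u_y$ over $S_\eps$, whose fibers are $k$-dimensional: $W^u_\eps$ is the set of initial conditions whose backward orbit stays in a fixed tube around $S_\eps$.

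Finally, the exponential-closeness statement in (ii) follows from the separation of timescales: a trajectory starting $O(1)$-close to $S_\eps$ has its $X^s_y$-component damped at rate $\alpha$ while it needs time only $O(1/\eps)$ to traverse the relevant $y$-range, so the stable component is squeezed to $O(e^{-c/\eps})$ and the orbit aligns with $W^u_\eps$ to that order; this is proven by a Gr\"onwall estimate on the decomposed variation-of-constants integral equation. I expect the main obstacle to be technical rather than conceptual: delay semigroups are not analytic and provide no smoothing, so every estimate must be carried out with the correct sun--star machinery and in function spaces adapted to the merely $C^0$ regularity of $X$, and one must verify that the spectral projections $P^{s}_y, P^{u}_y$ and the dichotomy constants $\alpha,\beta$ are genuinely uniform over the compact interval $[y_1,y_2]$ — the precise point at which normal hyperbolicity could otherwise degenerate. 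Once these uniformities are secured, the Lyapunov--Perron argument proceeds essentially as in the finite-dimensional case.
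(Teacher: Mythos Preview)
Your proposal is correct and close in spirit to the paper's argument: both rely on the spectral dichotomy of the linearized delay equation (finitely many unstable eigenvalues, uniform gap to the imaginary axis), the sun--star variation-of-constants formula, and a Lyapunov--Perron fixed-point to produce the slow manifold and its finite-dimensional unstable bundle. The exponential-closeness estimate you sketch is also the standard one.

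Where you differ is in the way the fixed-point problem is set up. You freeze $y$, take the autonomous spectral projections $P^{s}_y,P^{u}_y$, and look for the slow manifold directly as a graph $\phi_\eps:[y_1,y_2]\to X$ satisfying an invariance equation. The paper instead first modifies the slow flow outside $[y_1,y_2]$ so as to create artificial equilibria, picks a distinguished slow solution $\xi(t)$ of $y'=\eps g(0,y)$, and substitutes $y=\xi(t)$ into the linear part; this turns the problem into a genuinely non-autonomous linear delay equation in $x$ alone with evolution operator $T(t)$, for which the dichotomy projections $P_\pm(\tau)$ are time-dependent rather than $y$-dependent. The slow manifold is then obtained as a unique bounded trajectory in $BC_\eta(\R;X\times\R)$, exactly mimicking the center-manifold construction of \cite{diekmann1991center,diekmann1995delay}. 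Your graph approach is closer to the original Fenichel viewpoint and avoids the artificial modification of $g$, at the price of having to check smooth $y$-dependence of the frozen projections; the paper's trajectory-space approach inherits smoothness directly from the Diekmann--van Gils machinery and, importantly for the rest of the article, dovetails with the later arguments (Lemmas~\ref{lem-exten}--\ref{lem-inclu}) where the same cut-off-and-flow technique is reused to glue the Fenichel manifold to the center manifold at the canard point.
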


The proof of this theorem is provided in appendix~\ref{append:fenichel}. 

\subsection{Center manifold near singularities of the fast system}\label{sec:cmgen}
In this section we turn our attention to the behavior of the system close to singularities of the fast system; we discuss center manifold reductions around fold, 
canard and Bogdanov-Takens points. We consider a system of the form \eqref{eq-delsf}, and assume that there exists a fold point. This means that there exists 
$y_0$ such that the trivial solution of \eqref{eq-delsf} has a simple $0$ eigenvalue, i.e. the equation
$\Delta(y_0,\lambda)=0$ has a simple root $\lambda=0$. For simplicity of notation we assume that $y_0=0$.
Let $L$ denote the linear operator on $X$ given by
\[
L(\Phi) =\int_0^h   d\zeta(0, \tau) \Phi(-\tau)d\tau.
\]
It follows that $L$ has a simple $0$ eigenvalue.
We consider the extended system
\begin{align}\label{eq-delsfext}
\begin{split}
x'=& \int_0^h d\zeta(y, \tau) x_{t-\tau} + f(x_t, y, \eps)\\
y'=&\eps g(x, y)\\
\eps'=& 0.
\end{split}
\end{align} 
Note that the point $(0,0,0)$ is a non-hyperbolic equilibrium with three eigenvalues equal to $0$.
It follows from center manifold theory for delay equations ~\cite{faria-magalhaes:95,diekmann1991center} that there exists a three dimensional center manifold
containing this point. Let $\Phi$ be the eigenfunction of the $0$ eigenvalue and let $\Psi$ the 
the eigenfunction of the $0$ eigenvalue of a suitably chosen adjoint operator, which are both constant functions 
in the case of $\lambda=0$. Let $P$ be the projection
with ${\rm Im}\,(P)={\rm span}(\Phi)$ and the kernel given by the direct sum of the remaining eigenspaces.  
We have the following result.  
\begin{prop}\label{prop-cm}
There exists a function $h: \R^3 \to {\rm ker}\, P$ such the center manifold is given by
\[
\{(x_c\Phi+h(x_c,y,\eps),y,\eps)\; :\, (x_c,y,\eps) \mbox{ are in a small neighborhood of $(0,0,0)$.}\}
\]  
The reduction of  \eqref{eq-delsfext} to the center manifold has the form 
\begin{align}\label{eq-delsfred}
\begin{split}
x_c'=&f_c(x_c, y,\eps)\\
y'=&\eps g_c(x_c, y, \eps)\\
\eps'=&0,
\end{split}
\end{align}
with
\begin{align}\label{eq-cmfns}
\begin{split}
g_c(x_c,y,\eps)=&g\Big(x_c\Phi+{\mathbf r}h(x_c,y,\eps),y,\eps\Big),\\
f_{c}(x_c, y,z)=&\Psi{\mathbf r}(f\Big(x_c\Phi+h(x_c,y,\eps),y,\eps)\Big),
\end{split}
\end{align}
where ${\mathbf r}$ is the operator assigning to an element $x\in X$ its value at $0$
$x(0)$.
\end{prop}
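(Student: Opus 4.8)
The plan is to derive Proposition~\ref{prop-cm} from the classical center manifold theorem for delay equations applied to the extended system \eqref{eq-delsfext}, treating $\eps$ as an extra state variable. First I would set up the functional-analytic framework: rewrite \eqref{eq-delsfext} as an abstract ODE $\dot u = \mathcal{A}u + \mathcal{N}(u)$ on the product space $X\times\R\times\R$ (or on the appropriate space $Y=\R^n\times X$ used for delay equations in the sun-star formalism of~\cite{diekmann1995delay}), where $\mathcal{A}$ is the generator obtained by linearizing at $(0,0,0)$ and $\mathcal{N}$ collects the nonlinear remainder $f=O(|x|^2)$, the term $\eps g$, and the coupling $\eps$-dependence. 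Because $L$ has a simple $0$ eigenvalue by hypothesis and the equations $y'=\eps g$, $\eps'=0$ contribute a nilpotent $2\times 2$ block with double eigenvalue $0$ at the origin, $\mathcal{A}$ has a three-dimensional generalized eigenspace at $\lambda=0$ and the rest of its spectrum is bounded away from the imaginary axis (here one uses that $\Delta(0,\lambda)=0$ has $\lambda=0$ as its \emph{only} root on the imaginary axis, which is part of the fold assumption). The standard center manifold theorem~\cite{faria-magalhaes:95,diekmann1991center} then yields a locally invariant, locally attracting $C^k$ manifold tangent at the origin to this three-dimensional space, graph of a map over the center coordinates.

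Next I would make the graph representation explicit. Using the spectral projection $P$ onto $\mathrm{span}(\Phi)$ along the complement (here $\Phi$ is the constant eigenfunction and $\Psi$ the corresponding adjoint eigenfunction, as stated), and the trivial projections onto the $y$- and $\eps$-directions, the center coordinates are exactly $(x_c,y,\eps)$, and the center manifold is the graph of a function $h:\R^3\to\mathrm{ker}\,P$ with $h=O(|(x_c,y,\eps)|^2)$ (in fact $h=O(\eps)+O(x_c^2)+\dots$, since at $\eps=0$ and $x_c=0$ the point lies on the critical manifold). This gives the stated form of the manifold. Substituting $x_t = x_c\Phi + h(x_c,y,\eps)$ into \eqref{eq-delsfext} and projecting: applying $P$ (equivalently, pairing with $\Psi$ and evaluating at $0$ via $\mathbf r$) to the $x$-equation produces the scalar reduced equation $x_c' = f_c(x_c,y,\eps)$ with $f_c$ as in \eqref{eq-cmfns}, where the linear term drops because the restriction of $L$ to the center direction is zero; the $y$-equation becomes $y'=\eps g_c$ with $g_c$ obtained by evaluating $g$ at the manifold point, i.e. at argument $x_c\Phi+\mathbf r h(x_c,y,\eps)$ (note $g$ depends on $x$ through its finite-dimensional value, hence the $\mathbf r$); and $\eps'=0$ is unchanged.

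The main obstacle is the functional-analytic bookkeeping rather than any deep new idea: one must verify that the delay equation \eqref{eq-delsfext}, with the $y$- and $\eps$-dependence of the kernel $\zeta$ and of $f$, fits the hypotheses of the center manifold theorem in~\cite{faria-magalhaes:95} (sufficient smoothness of $\zeta(y,\tau)$ in $y$, of $f$ and $g$; the spectral gap; the boundedness needed to apply the contraction that constructs $h$), and that the projection machinery — in particular the use of the adjoint eigenfunction $\Psi$ and the evaluation operator $\mathbf r$ — correctly reproduces the reduced vector field. Since $\Phi$ and $\Psi$ are constants for the $\lambda=0$ root, the bilinear form pairing simplifies considerably, and the formulas \eqref{eq-cmfns} follow by a direct computation once the substitution is carried out. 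I would therefore present the proof as: (1) cite the center manifold theorem for the extended system; (2) identify the three-dimensional center eigenspace using the fold hypothesis and the nilpotent block; (3) read off the graph representation and parametrization by $(x_c,y,\eps)$; (4) substitute and project to obtain \eqref{eq-delsfred}--\eqref{eq-cmfns}, noting the vanishing of the linear term on the center direction.
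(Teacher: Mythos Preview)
Your proposal is correct and follows essentially the same approach as the paper: both invoke the center manifold theorem for delay equations from~\cite{faria-magalhaes:95,diekmann1991center} applied to the extended system, and the key structural observation in each is that the three-dimensional center eigenspace splits as $\mathrm{span}(\Phi)$ in the $x$-direction together with the trivial $(y,\eps)$-directions, so that the reduction in $y$ and $\eps$ is as in the ODE case. The paper's proof is in fact much terser than yours---it is essentially a citation plus this one remark on the eigenspace---so your outline, while more detailed, covers the same ground.
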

\begin{proof}
	We refer to ~\cite{faria-magalhaes:95} for details
	on center manifold reduction. Here we just point out that the eigenspace
	of $0$ consists of a vector $\phi\in X$ and the vectors $(0,1,0)^T$ and $(0,0,1)^T$
	in the $y$ and $\eps$ directions. Hence, the reduction in the $y$ and $\eps$ directions
	is the same as in the ODE case.	
\end{proof}
We say that the point $(0, 0, 0)$ is a non-degenerate fold point if $f_{c,xx}(0,0,0)\neq 0$,
$f_{c,y}(0,0,0)\neq 0$ and $g_c(0,0,0)\neq 0$. If $g_c(0,0,0)=0$ then $(0,0,0)$ is a canard
point. Non-degeneracy conditions for a canard point are complicated in general. These are recalled in 
the course of appendix~\ref{append:Campbell}. We will
not restate them here but rather refer the reader to \cite{krupa2001extending} (the presence 
of delays does not modify these conditions since we have reduced the problem on a finite-
dimensional manifolds). In the specific example we 
consider in the sequel these conditions are simpler and will be verified. 
The dynamics of a fold point or a canard point restricted to the center manifold  is
now as described in  \cite{krupa2001extending}.

Finally,
consider a system of the form \eqref{eq-delsf} at a fold point with an additional degeneracy of Bogdanov-Takens type. This means that there exists 
$y_0$ such that the trivial solution of \eqref{eq-delsf} has a double $0$ eigenvalue with one eigenvector and one generalized eigenvector. 
In the extended system~\eqref{eq-delsfext}, this point is thus a non-hyperbolic equilibrium with four eigenvalues equal to $0$. It follows from center manifold theory for delay equations~\cite{faria-magalhaes:95,campbell2008zero} that there exists a four dimensional center manifold
containing this point. Reduction of the system around this point is similar to that of the previous section, and detailed calculations are provided in the particular case of the delayed van der Pol system in~\cite{krupa-touboul:14b}. 

\subsection{Canard explosion}
We consider a system of the form \eqref{eq-delsfext}, depending on a regular parameter $\mu$ with an S-shaped critical manifold, as shown in Fig. \ref{fig-S}.
Fix $\mu=\mu_0$. Let
\[
S=S_-\cup \{(x_m, y_m)\}\cup S_r\cup \{(x_M, y_M)\}\cup S_+.
\]
The following hypothesis are necessary for a canard explosion result:\\[0.2ex]
 {\bf (H1)} $(x_m, y_m)$ and $(x_M, y_M)$ are a non-degenerate canard point and a non-degenerate fold point. 
\\[0.2ex]
{\bf (H2)} $S_-\cup S_+$ consists of sinks of \eqref{eq-delf} and $S_r$ consists of saddle points with one dimensional
unstable direction. \\[0.2ex]
{\bf (H3)} There exist connections from $S_r\to S_\pm$ as shown in Fig.  \ref{fig-S}.\\[0.2ex]
We will verify hypotheses {\bf (H1)} and {\bf (H2)} in the context of  \eqref{eq:vdpdelay}
and present numerical evidence that {\bf (H3)} is also satisfied.
For the remainder of this section we assume that {\bf (H1)-(H3)} hold.
\begin{figure}[htbp]
	\centering
	      \includegraphics[width=.5\textwidth]{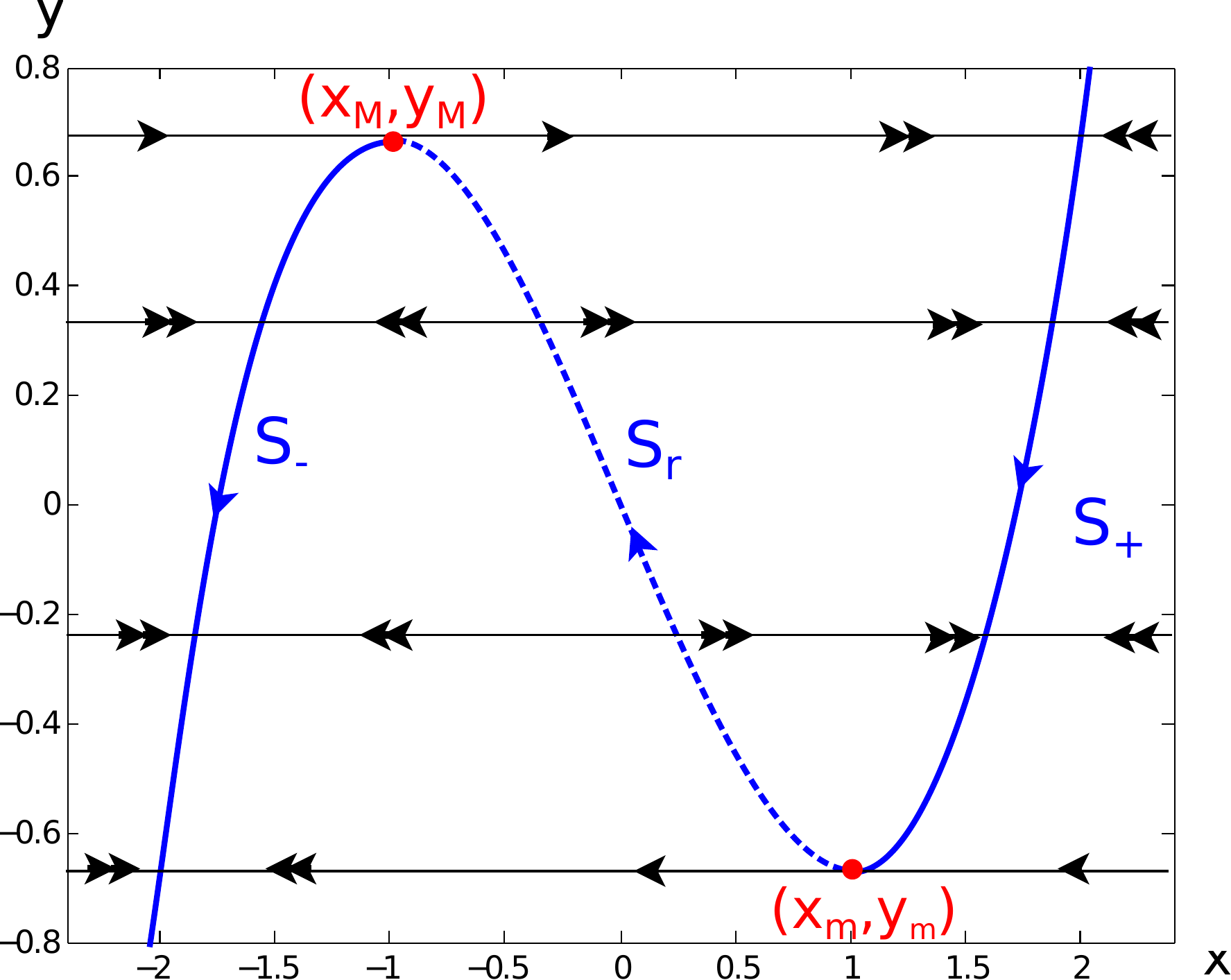}
	\caption{Slow and fast dynamics for $\eps=0$ necessary for a canard explosion. Thick black $S$-shaped curve is the critical manifold, thin phaselines are schematics of the fast dynamics. Picture drawn in the case of the vdP system~\eqref{eq:vdpdelay}.}
	\label{fig-S}
\end{figure}
We begin with a result on extending a center manifold $C_\eps$ existing near the canard point.
\begin{lemma}\label{lem-exten}
The manifold $W^u(S_{r,\eps})$  obtained by Theorem \ref{thm-fen} and  the center manifold $C_\eps$ 
near the canard point obtained by Proposition \ref{prop-cm} can be chosen to overlap. More specifically, there exists a choice of $W^u(S_{r,\eps})$ (including a choice of 
$S_{r,\eps}$ itself)  and a choice of  a center manifold $C_\eps$ so that the two manifolds  overlap on a neighborhood of a segment of $S_{r,\eps}$.  
\end{lemma}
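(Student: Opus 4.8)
The plan is to exploit the non-uniqueness built into both constructions — slow manifolds and their unstable manifolds are pinned down only up to $O(e^{-c/\eps})$, and center manifolds are never unique — to arrange that the two manifolds literally coincide on a piece of $S_{r,\eps}$ lying in the center-manifold neighborhood. The structural fact I would build on is that, near the canard point, the three-dimensional center manifold $\mathcal{C}$ of the extended system \eqref{eq-delsfext} produced by Proposition \ref{prop-cm} already contains all the relevant slow and weakly-unstable directions: at $(0,0,0)$ the only directions transverse to the center subspace are strongly stable (the hyperbolic part of the spectrum of $L$), while the fast eigenvalue crossing zero at the fold, the slow direction, and the $\eps$-direction all lie in the center subspace. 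For each small $\eps>0$ set $C_\eps=\mathcal{C}\cap\{\eps=\mathrm{const}\}$; this is a two-dimensional locally invariant manifold for \eqref{eq-delsf}, carrying the planar slow-fast reduced flow \eqref{eq-delsfred}. Running the classical finite-dimensional Fenichel/Krupa--Szmolyan construction \emph{inside} $C_\eps$ produces a repelling slow manifold contained in $C_\eps$, which I would take as the definition of $S_{r,\eps}$ near the canard point (and then extend it away from the canard point so that globally it is an admissible Fenichel repelling slow manifold — legitimate since any two such manifolds differ by $O(e^{-c/\eps})$). Since $C_\eps$ is two-dimensional and $S_{r,\eps}$ is normally repelling within it, the unstable manifold of $S_{r,\eps}$ computed inside $C_\eps$ is an open subset of $C_\eps$.

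The next step is to identify this subset of $C_\eps$ with $W^u(S_{r,\eps})$ of Theorem \ref{thm-fen}, restricted near the canard point. I would fix a segment $\Sigma\subset S_{r,\eps}$ contained in the center-manifold neighborhood but bounded away from the canard point. On $\Sigma$ the slow manifold is genuinely normally hyperbolic, its normal bundle splitting into a one-dimensional unstable bundle and the strongly stable bundle with a uniform spectral gap, so the local unstable manifold of $\Sigma$ is unique. Both candidate manifolds — the one sitting in $C_\eps$, and the manifold $W^u(S_{r,\eps})$ delivered by Theorem \ref{thm-fen} for the choice of $S_{r,\eps}$ just fixed — contain $\Sigma$, are tangent along $\Sigma$ to the slow $\oplus$ unstable plane (for the second, this is read off from the spectral-projection/graph-transform construction of Appendix \ref{append:fenichel}, whose unstable fibers over points of $\Sigma$ are tangent to the unstable eigendirection, which lies in $T\mathcal{C}$), and consist of orbits backward-asymptotic to $\Sigma$. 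By uniqueness they agree on a neighborhood of $\Sigma$, which is the assertion. In summary the order is: (i) fix a representative of $\mathcal{C}$, hence of each $C_\eps$; (ii) choose $S_{r,\eps}$ to be the repelling slow manifold lying in $C_\eps$ near the canard point, extended admissibly elsewhere; (iii) with $S_{r,\eps}$ fixed, $W^u(S_{r,\eps})$ is determined near $\Sigma$ and, by step (ii), coincides there with $C_\eps$.

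The step I expect to be the real obstacle is the uniqueness argument of the second paragraph, carried out honestly in the delay setting: one must transport the uniqueness of unstable manifolds of normally hyperbolic invariant manifolds — equivalently, uniqueness of the governing graph-transform fixed point — to the infinite-dimensional space $X$, using the exponential-dichotomy/variation-of-constants machinery of Appendix \ref{append:fenichel}, and verify that the cutoff functions entering both that construction and the center-manifold construction of Proposition \ref{prop-cm} can be taken inactive on the overlap region, so the two fixed-point problems are literally the same there. This is precisely where the phrase ``can be chosen'' in the statement does its work; since the set-up parallels the finite-dimensional case, I expect the adaptation to be routine modulo careful dichotomy estimates.
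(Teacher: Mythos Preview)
Your argument is sound in outline but takes a genuinely different route from the paper's. You fix $C_\eps$ first, build $S_{r,\eps}$ inside it by running finite-dimensional Fenichel theory on the reduced planar system \eqref{eq-delsfred}, extend it, and then invoke \emph{uniqueness} of the local unstable manifold over a segment $\Sigma\subset S_{r,\eps}$ bounded away from the fold to conclude that $W^u(S_{r,\eps})$ and $C_\eps$ agree there. The paper instead avoids any uniqueness argument. It first observes that on the overlap the two manifolds are close (tangent spaces nearly agree, and exponential attraction transverse to $W^u(S_{r,\eps})$ forces a flow-forward extension of $C_\eps$ to be exponentially close to it), and then, for the decisive smoothness step, \emph{modifies the slow equation}: it multiplies $g$ in \eqref{eq-delsflin} by a cutoff that vanishes near the canard point and equals $1$ away from it, and runs a single Fenichel-type fixed-point construction (that of Appendix~\ref{append:fenichel}) on the modified system, with the slow domain enlarged to include a neighborhood of the canard point. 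The output is one smooth manifold that is simultaneously a legitimate choice of $C_\eps$ near the fold (where the modified $g$ vanishes and the $y$-dynamics is stationary) and a legitimate choice of $W^u(S_{r,\eps})$ away from it.

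Your approach is conceptually cleaner and closer to how one argues in the ODE case; it makes the finite-dimensional reduction do the work and keeps the infinite-dimensional input to a single uniqueness statement. The cost is exactly the step you flag: making local-unstable-manifold uniqueness precise in the delay setting, with slow drift along $\Sigma$ and cutoffs present in both constructions, requires care (one must check that on the overlap neither cutoff is active, so both manifolds consist of genuine backward-asymptotic orbits of the unmodified system, and then use that $C_\eps$ and $W^u(S_{r,\eps})$ are two-dimensional and share $\Sigma$ with the same tangent plane). The paper's cutoff-and-rebuild trick is less transparent but sidesteps that verification entirely, and has the bonus that it plugs directly into the Appendix~\ref{append:fenichel} machinery, which is reused in the proofs of Lemma~\ref{lem-inclu} and Proposition~\ref{prop-smep0}.
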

This lemma is illustrated in Fig.~\ref{fig:Manifold}.
\begin{figure}[htbp]
	\centering
	          \includegraphics[width=.4\textwidth]{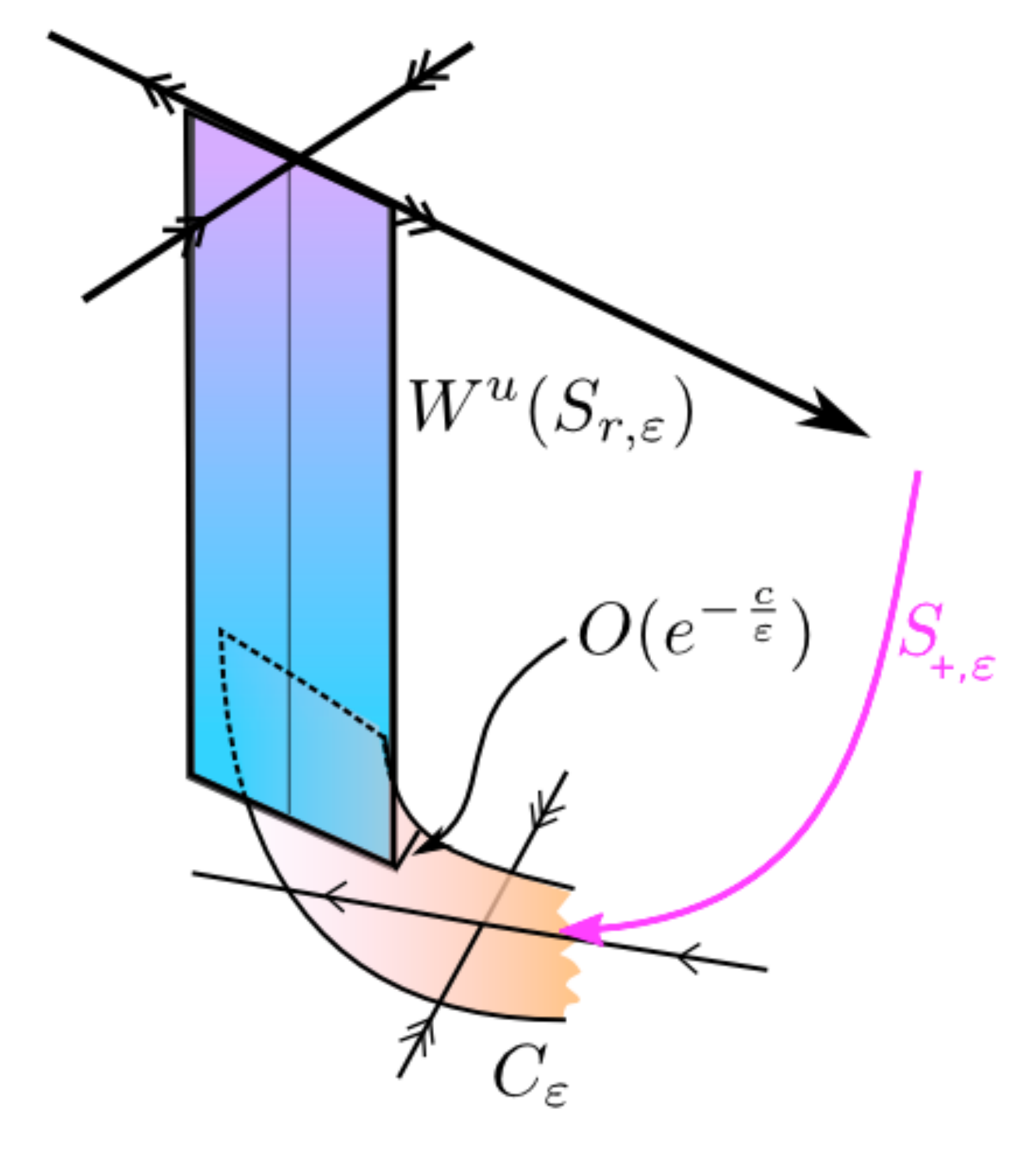}
	\caption{Center manifold $C_{\eps}$, stable manifold $S_{+,\eps}$ and the unstable manifold of the saddle-type slow manifold $S_{r,\eps}$ (see Lemma~\ref{lem-exten}).}
	\label{fig:Manifold}
\end{figure}
\begin{proof}First note that the manifolds $C_\eps$ and $W^u(S_{r,\eps})$ can be chosen so that their regions of existence overlap
(see Fig.~\ref{fig:Manifold}).
On the overlap, the tangent spaces to $C_\eps$  and $W^u(S_{r,\eps})$ are close to each other, by construction.
Hence, on the overlap, the two manifolds are close to each other. Note further that near the canard
point the stable part of the spectrum is bounded away from $0$, and hence, one can choose a neighborhood 
$V\subset X$ of $W^u(S_{r,\eps})$ such that the trajectories in $V$ are exponentially attracted to $W^u(S_{r,\eps})$ as long as
they stay in $V$.  By choosing the domain of existence of $W^u(S_{r,\eps})$ so that it extends sufficiently close to the  fold we can guarantee
that $C_\eps$ has a non-empty intersection with  $V$. More specifically, we can choose a subset of $C_\eps$ containing an interval 
$I_0$ defined by $y=y_0$ bounded by two points $(x_{c1}, y_0)$ and  $(x_{c2}, y_0)$ such that $\Phi_{c,t}(x_{c1}, y_0)$ escapes
from $C_\eps$ towards $S_{-,\eps}$ and  $\Phi_{c,t}(x_{c2}, y_0)$ escapes from $C$ towards $S_{+,\eps}$ ($\Phi_{c,t}$ denotes the flow of \eqref{eq-delsfred}).
We consider an interval $I_1\subset \{y=y_1\}\subset C_\eps$ given by a transition map from $I_0$ to $I_1$ by the flow $\Phi_{c,t}$
By exponential attraction of $W^u(S_{r,\eps})$ the interval $I_1$ is exponentially close to $W^u(S_{r,\eps})$.
We now extend the manifold $C_\eps$ by applying the semi-flow $\Phi_{t}$ to initial conditions in $I_1$ and intersecting with $V$.
This gives a $C^0$ manifold $\tilde C_\eps$
exponentially close to $W^u(S_{r,\eps})$. To see that 
there exists a choice of $C_\eps$ such that this extension is smooth
we modify the construction of $C_\eps$ and $W^u(S_{r,\eps})$ by first multiplying $g$ in \eqref{eq-delsflin}
by a cut-off function which is $0$ on a small neighborhood of the canard point and $1$ outside of a small neighborhood of the canard point.
Subsequently we apply the argument sketched in Appendix \ref{append:fenichel} with the domain in the slow direction of $y$ extended
to include a neighborhood of the canard point and with the linear flow defined by the modified  \eqref{eq-delsflin}.
This flow is stationary in the $y$ direction as long as the modified $g$ equals $0$ and is defined by \eqref{eq-lin1}
when the modified $g$ is positive. Note that the proof now yields a manifold which is gives a choice of $C_\eps$ near the canard point
and a choice of $W^u(S_{r,\eps})$ away from the canard point. The proof can be extended in the standard fashion to show that 
the manifold obtained in this manner is smooth. \end{proof}
\begin{lemma}\label{lem-inclu}
There exists a choice of the stable slow manifold $S_{+,\eps}$ and the center manifold $C_\eps$ such that a segment  
of $S_{+,\eps}$ is included in $C_\eps$.  
Moreover, $C_\eps$ can be chosen as specified in Lemma \ref{lem-exten} and there exists a smooth curve in the parameter space of the
form $(\mu_c(\eps),\eps)$ such that if $\mu=\mu_c(\eps)$ then  $S_{+,\eps}$ connects to $S_{r,\eps}$. 
The connection from $S_{+,\eps}$ to $S_{r,\eps}$ is called a canard solution.
\end{lemma}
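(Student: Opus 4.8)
The plan is to push the entire question onto the finite-dimensional center manifold $C_\eps$ produced by Proposition~\ref{prop-cm}: on $C_\eps$ the reduced system~\eqref{eq-delsfred} is, after discarding the trivial equation $\eps'=0$, an ordinary planar slow-fast system with a non-degenerate canard point, and there the desired connection is precisely the maximal canard of Krupa--Szmolyan~\cite{krupa-szmolyan:01,krupa2001extending}. The only work genuinely specific to the delayed setting is arranging that the relevant slow manifolds all lie inside one such $C_\eps$; once that is done, the canard branch is obtained by the classical two-dimensional argument applied verbatim.

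First I would show that a segment of $S_{+,\eps}$ can be taken inside $C_\eps$. Applying Theorem~\ref{thm-fen} to the branch $S_+$ gives, for $\eps$ small, an attracting slow manifold, unique up to $O(e^{-c/\eps})$, together with its strong stable foliation. On the other hand, the slow manifold of the reduced planar flow~\eqref{eq-delsfred} on $C_\eps$, continued along the branch corresponding to $S_+$, is an invariant manifold of the extended system~\eqref{eq-delsfext}; it is attracting because the non-center part of the spectrum of $L$ is bounded away from the imaginary axis and, on $S_+$, lies in the open left half-plane by {\bf (H2)}. Since $S_{+,\eps}$ is determined only up to exponentially small corrections, I would simply declare it to be this manifold, so that a segment of $S_{+,\eps}$ lies in $C_\eps$.

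Next I would make this choice of $C_\eps$ compatible with the one already fixed in Lemma~\ref{lem-exten}, so that a single $C_\eps$ is at once a center manifold near the canard point, contains a segment of $S_{+,\eps}$, and overlaps $W^u(S_{r,\eps})$ near a segment of $S_{r,\eps}$. As in the proof of Lemma~\ref{lem-exten}, this is achieved by multiplying $g$ by a cut-off that vanishes near the canard point, extending the slow range of $y$ to include a neighborhood of that point, and rerunning the invariant-manifold construction sketched in Appendix~\ref{append:fenichel}; the output is a manifold that is a genuine center manifold near the canard point and agrees with the exponentially close Fenichel manifolds $S_{+,\eps}$, $S_{r,\eps}$, $W^u(S_{r,\eps})$ away from it, with smoothness obtained exactly as in Lemma~\ref{lem-exten}.

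Finally, on $C_\eps$ the reduced system~\eqref{eq-delsfred} is a planar slow-fast ODE with a non-degenerate canard point at $(x_m,y_m)$ --- the non-degeneracy conditions transfer from {\bf (H1)} since, by Proposition~\ref{prop-cm}, the reduction leaves them unchanged --- whose attracting and repelling slow manifolds are the chosen representatives of $S_{+,\eps}$ and $S_{r,\eps}$. Continuing both past the canard point and recording their signed separation $d(\mu,\eps)$ along a fixed section $\{y=\text{const}\}$ transverse to the slow flow defines a function smooth in $(\mu,\eps)$; the blow-up analysis of~\cite{krupa-szmolyan:01,krupa2001extending} gives $d(\mu_0,0)=0$ and $\partial_\mu d\neq 0$, and the implicit function theorem then yields a smooth curve $\mu=\mu_c(\eps)$ along which $d$ vanishes, i.e.\ along which $S_{+,\eps}$ connects to $S_{r,\eps}$; this is the maximal canard solution. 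I expect the main obstacle to be the compatibility construction of the previous paragraph: making one manifold $C_\eps$ serve simultaneously as a center manifold at the canard point and as a carrier of both a segment of $S_{+,\eps}$ and of $W^u(S_{r,\eps})$, so that the whole geometry collapses onto a single finite-dimensional reduced system --- after which nothing delay-specific remains.
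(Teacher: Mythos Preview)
Your proposal is correct and reaches the same conclusion, but the first step --- getting a segment of $S_{+,\eps}$ inside $C_\eps$ --- is handled differently from the paper. You fix a center manifold $C_\eps$ and then observe that the attracting slow manifold of the reduced planar flow on $C_\eps$ is an invariant, normally attracting curve of the full delayed system (the off-center spectrum being stable on $S_+$ by {\bf (H2)}), hence a legitimate choice of $S_{+,\eps}$; in short, you choose $S_{+,\eps}$ to fit a given $C_\eps$. The paper does the opposite: it fixes the Fenichel $S_{+,\eps}$, performs a change of variables $\tilde x = x - \psi_\eps(y)$ that straightens $S_{+,\eps}$ to the line $\tilde x=0$ on an interval $[y_1,y_2]$, and then multiplies $g$ by a cut-off $\eta(y)$ to plant an artificial saddle equilibrium on that line. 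Any center manifold of the modified system must contain the one-dimensional unstable manifold of that saddle, which is exactly $\tilde x=0$, i.e.\ $S_{+,\eps}$; since the modification is supported away from the canard point, this is also a center manifold of the original system. Your route is more direct and avoids the coordinate change and artificial-saddle trick; the paper's route is more constructive in that it shows how to tailor $C_\eps$ around a \emph{pre-specified} $S_{+,\eps}$, which is convenient later when one wants to track a particular Fenichel manifold (e.g.\ in the proof of Proposition~\ref{prop-smep0}). For the second claim --- the existence of the smooth canard curve $\mu_c(\eps)$ --- both you and the paper defer to the two-dimensional Krupa--Szmolyan/Melnikov argument on $C_\eps$, so there the approaches coincide.
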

\begin{proof}We first  modify the construction of $C_\eps$ and $S_{+,\eps}$ to ensure that a segment of $S_{+,\eps}$
is included in $C_\eps$. 
Note that $S_{+,\eps}$ is defined as a graph of a function $\Phi_\eps\; :\; \R\to X$. We define 
$\varphi_\eps={\bf r}(\Phi_\eps(y))$, where ${\bf r}$ is the restriction operator introduced in Appendix \ref{append:fenichel}.
The invariance of $S_{+,\eps}$ now implies
\begin{equation}\label{eq-invcond}
\eps\varphi'_\eps(y)g(\varphi_\eps(y), y)=\int_0^h d\zeta(y, \tau) \Phi_\eps(y)(-\tau)+F(\varphi_\eps(y),y).
\end{equation}
Note that $\varphi_\eps$ is not defined on the neighborhood of $(x_m, y_m)$. We extend it by an arbitrary function, just making
sure the extension has the same degree of regularity.
We fix $y_1$ and $y_2$ satisfying $y_m<y_1<y_2<y_M$ and let $\kappa:\R\to\R$ be a non-negative function equal to $1$ on
$[y_1, y_2]$ and $0$ on a neighborhood of $y_m$. Let $\psi_\eps(y)=\kappa(y)\varphi_\eps(y)$.
We define a new variable 
\begin{equation}\label{eq-defxtil}
\tilde x=x-\psi_\eps(y)
\end{equation} 
and transform \eqref{eq-delsf} to the new variables. It follows from \eqref{eq-invcond} that \eqref{eq-delsf} transforms to
\begin{align}\label{eq-delsft}
\begin{split}
x'=& \int_0^h d\zeta(y, \tau) x_{t-\tau} + \tilde f(x_t, y, \eps)\\
y'=&\eps g(x+\varphi_\eps(y), y),
\end{split}
\end{align}
where $\tilde f(0,y,\eps)=D_x\tilde f(0,y,\eps)=0$. We now pick $y_3$ between $y_2$ and $y_1$
and let $\eta(y)$ be a $C^\infty$ function which satisfies $\eta'>0$ for $y> y_3$, $\eta(y_2)=0$,
$\eta(y)=1$ for $y<y_3$. We consider the system
\begin{align}\label{eq-delsftm}
\begin{split}
x'=& \int_0^h d\zeta(y, \tau) x_{t-\tau} + \tilde f(x_t, y, \eps)\\
y'=&\eps g(x, y)\eta(y).
\end{split}
\end{align}
Note that \eqref{eq-delsftm} has a saddle type equilibrium point at $(0, y_2)$ with one dimensional unstable manifold.
We now construct a center manifold for \eqref{eq-delsft} near the canard point $(x_m,y_m)$.
Note that, by choosing $y_2$ sufficiently small, we can ensure that the added saddle point
is on the center manifold $\tilde C_\eps$, as well as its unstable manifold, which, for $y\in (y_1, y_2)$, coincides
with the line $x=0$. To complete the proof of the claim we make two observations. First, since the dynamics of \eqref{eq-delsft}
and \eqref{eq-delsftm} coincide on a small neighborhood of the canard point, $\tilde C_\eps$ defines also a center
manifold of \eqref{eq-delsf}. Second, since the dynamics of \eqref{eq-delsftm} and \eqref{eq-delsft} are the same for $(y_2, y_3)$,
the line segment $\{(0,y), y\in (y_2, y_3)\}$ is both on $\tilde C_\eps$ and corresponds to a segment of $S_{+,\eps}$. The first
claim of the lemma follows.

To prove the second claim note that the argument in the proof of Lemma \ref{lem-exten} can be applied independently of the one described in
the preceding paragraph, so that the manifold $C_\eps$ can be extended all the way to the vicinity of $(x_M, y_M)$.
The existence of a connecting orbit from $S_{+,\eps}$ to $S_{r,\eps}$ is then a direct conclusion of the arguments in
\cite{krupa2001extending} as segments of both $S_{+,\eps}$ and $S_{r,\eps}$ are contained in
$\tilde C_\eps$. As in~\cite{krupa2001extending} we set up a Melnikov integral and observe that its value is determined, up to exponentially small terms, 
by the restriction of the flow on $C_\eps$ to a small neighborhood of the canard point. 
\end{proof}

\noindent  We now formulate conditions that guarantee the stability of canard cycles, see \cite{krupa-szmolyan:01}.
Let $\lambda_p(y)$ be the unique positive root of $\Delta(\lambda,y)=0$ corresponding to the 
saddle-type part of $S_0$. Let $\lambda_{n,+}(y)$ be the largest characteristic root corresponding  to the right branch
of $S_{+}$. We parametrize the branches of $S$ between the fold points, associated with $y=y_m$
and $y=y_M$ as $(y,\phi_-(y))$, $(y,\phi_{r}(y))$
and $(y,\phi_+(y))$, with $(y,\phi_-(y))$ and $(y,\phi_+(y))$ corresponding to the stable branches
and $(y,\phi_{r}(y))$ to the saddle type branch. 
 For every $y_*$ satisfying $y_m< y_* < y_M$ let
 \begin{align}\label{eq-defR}
 \begin{split}
 R_{n,+}(y)&=\int_{y_m}^{y_*} \frac{\lambda_{n,+}(y)}{g(\phi_+(y),y)}dy\\
 R_{n,-}(y)&=\int_{y_*}^{y_M} \frac{\lambda_{n,-}(y)}{g(\phi_-(y),y)}dy\\
 R_p(y)&=\int_{y_m}^{y_*} \frac{\lambda_p(y)}{g(\phi_{r}(y),y)}dy.
 \end{split}
 \end{align}
We make the following assumption:\\[0.2ex]
{\bf (H4)} $R_{n,+}(y_*) >R_p(y_*)$ for every $y_*$ satisfying $y_m< y_* < y_M$.
\noindent \begin{theorem}\label{th-canoh}
Suppose {\bf (H4)} holds, in addition to {\bf (H1)-(H3)}. Then, for every $\eps$ sufficiently small, there exists a family of canard cycles continuing
from small, Hopf type cycles to relaxation cycles, through canards with no head and 
subsequently canards with head. The transition from small canards
to canards with large head takes place in an exponentially small interval of the parameter $\mu$.
The cycles are stable and unique (at most one for each choice of $(\mu,\eps)$) and depend smoothly on the parameters.
\end{theorem}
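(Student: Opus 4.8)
The plan is to reduce the entire statement to the finite-dimensional canard-explosion theory of~\cite{krupa-szmolyan:01,krupa2001extending}, using Theorem~\ref{thm-fen}, Proposition~\ref{prop-cm} and Lemmas~\ref{lem-exten}--\ref{lem-inclu} to supply, in the infinite-dimensional setting, each geometric object that theory relies on. First I would work on the center manifold $C_\eps$ produced by Lemma~\ref{lem-inclu}: for fixed small $\eps$ it is two-dimensional, it contains a segment of $S_{+,\eps}$ and, after the modifications carried out in the proofs of Lemmas~\ref{lem-exten}--\ref{lem-inclu}, a segment of $S_{r,\eps}$ together with a suitable choice of $W^u(S_{r,\eps})$, and the reduced system~\eqref{eq-delsfred} restricted to it is a planar slow--fast vector field with a non-degenerate canard point at $(x_m,y_m)$ by \textbf{(H1)} and Proposition~\ref{prop-cm} (the non-degeneracy conditions are conditions on $f_c,g_c$ only, hence unaffected by the delay). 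On $C_\eps$ the blow-up construction of~\cite{krupa-szmolyan:01,krupa2001extending} then applies essentially verbatim and produces, for $\mu$ in an interval around the curve $\mu_c(\eps)$ of Lemma~\ref{lem-inclu}, the continuous family of periodic orbits ranging from small Hopf-type cycles, through headless canards, up to the maximal canard; the exponential smallness of that $\mu$-interval comes, exactly as in the proof of Lemma~\ref{lem-inclu}, from the fact that the Melnikov integral measuring the splitting of the attracting and repelling slow manifolds along $C_\eps$ is determined up to $O(e^{-c/\eps})$ by the flow in a small neighborhood of the canard point.

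Next I would extend the family past the maximal canard to the canards with head and, in the limit, to the relaxation cycle. For these orbits the trajectory leaves a neighborhood of the canard point along the strong unstable fibre, is attracted exponentially fast by Theorem~\ref{thm-fen}(ii) toward the fast connection $S_r\to S_-$ of \textbf{(H3)}, follows the sink $S_{-,\eps}$ (which is a sink by \textbf{(H2)}) up to the non-degenerate fold $(x_M,y_M)$, where the standard fold passage on the center manifold of Proposition~\ref{prop-cm} at that point applies, jumps along the connection $S_-\to S_+$ of \textbf{(H3)} near that fold, and relaxes onto $S_{+,\eps}$, closing the loop. The only point to check beyond the ODE situation is that the fast connections of \textbf{(H3)} persist for small $\eps>0$; this is an open transversality condition because the unstable manifolds involved are finite-dimensional by Theorem~\ref{thm-fen}(ii), and all the slow segments and the landings onto the attracting slow manifolds are governed directly by Theorem~\ref{thm-fen}.

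For uniqueness and stability I would set up a cross-section $\Sigma\subset X$ transverse to the family, say at $\{y=y_*\}$ near $S_{+,\eps}$, and study the first-return map $\Pi_{\mu,\eps}$ of~\eqref{eq-delsf}. Since every trajectory contracts at rate $O(e^{-c/\eps})$ onto the finite-dimensional manifolds $S_{\pm,\eps}$, $C_\eps$ and $W^u(S_{r,\eps})$, the map $\Pi_{\mu,\eps}$ is, up to an exponentially small error, conjugate to a one-dimensional return map whose derivative is, by the Krupa--Szmolyan computation transported through the exponents $\lambda_{n,\pm}(y)$ and $\lambda_p(y)$ that control contraction along $S_\pm$ and expansion along $S_r$, bounded in absolute value by $\exp\left([R_p(y_*)-R_{n,+}(y_*)]/\eps+\cdots\right)$, the extra terms coming from the canard-point passage and from $R_{n,-}$. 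Assumption \textbf{(H4)}, $R_{n,+}(y_*)>R_p(y_*)$ for all admissible $y_*$, therefore makes $\Pi_{\mu,\eps}$ a strong contraction, so for each $(\mu,\eps)$ there is at most one canard cycle and it is asymptotically stable; smooth dependence on $(\mu,\eps)$ is inherited from the smoothness of $C_\eps$, $S_{\pm,\eps}$, the connections, and $\mu_c(\eps)$.

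The hard part, I expect, is precisely this last step: proving rigorously that the infinitely many strongly stable modes of the delay equation contribute only exponentially small corrections to $\Pi_{\mu,\eps}$ \emph{and to its linearization}, so that the contraction estimate is genuinely governed by the finitely many characteristic exponents entering \textbf{(H4)}. This is where the exponential dichotomy underlying the proof of Theorem~\ref{thm-fen} (Appendix~\ref{append:fenichel}) and the finite-dimensionality of $W^u(S_{r,\eps})$ do the essential work: together they confine both the recurrent dynamics and its variational equation to a finite-dimensional slow bundle up to $O(e^{-c/\eps})$ remainders, after which the planar estimates of~\cite{krupa-szmolyan:01,krupa2001extending} can be quoted without change.
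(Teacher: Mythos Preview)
Your proposal is correct and follows essentially the same route as the paper: reduce to the two-dimensional center manifold $C_\eps$ via Lemmas~\ref{lem-exten}--\ref{lem-inclu}, invoke the planar canard theory of \cite{krupa-szmolyan:01,krupa2001extending} for the local part and the exponential smallness of the $\mu$-interval, and then prove uniqueness and stability via a Poincar\'e map that is an exponential contraction thanks to \textbf{(H4)}. The paper's proof is in fact terser than yours: it takes the section at $\{x_c=x_m\}$ rather than at $\{y=y_*\}$, and it simply asserts that \textbf{(H4)} makes the return map a contraction on a small neighborhood $U$ of $p_{+,\eps}$, without spelling out the reduction of the infinite-dimensional linearization to the finitely many exponents $\lambda_p,\lambda_{n,\pm}$ that you flagged as the hard part; in the paper that reduction is implicit in the exponential attraction statements of Theorem~\ref{thm-fen} and the construction of $C_\eps$.
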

\begin{proof}The hypotheses {\bf (H1)} - {\bf (H4)} guarantee the local part of canard explosion restricted
to the center manifold. By Lemmas~\ref{lem-exten} and~\ref{lem-inclu} we can choose $C_\eps$
so that it contains a segments of $S_{+,\eps}$ and extends to $W^u(S_r)$.
We can now measure the separation between $S_{+,\eps}$ and $S_{r,\eps}$
in the 2-dimensional center manifold $C_\eps$, in which the flow is as described in~\cite{krupa-szmolyan:01,krupa2001extending}.
When $S_{+,\eps}$ and $S_{r,\eps}$ are exponentially close, which corresponds to the parameter region
very close to the locus of a connection from $S_{+,\eps}$ to $S_{r,\eps}$, 
then the forward continuation of  $S_{+,\eps}$
follows $W^u(S_{r,\eps})$ for a time of order $O(1/\eps)$ and splits off  towards either $S_{+,\eps}$ or $S_{-,\eps}$.
Either way, it ends up being attracted to $S_{+,\eps}$ and returning very close
to itself in the vicinity of the canard point. We now define a section
of the flow $\Delta$ by the requirement $x_c=x_{m}$ and let $p_{+,\eps}= S_{+,\eps}\cap\Delta$. Further we consider a small neighborhood $U$
of $p_{+,\eps}$ in $\Delta$. Note that under the assumptions made above the Poincar\'{e} map from $U$ to $\Delta$  is well defined, provided that $U$ 
is sufficiently small. Due to {\bf (H4)} this Poincar\'{e} map is an exponential contraction mapping $U$ into itself,
which implies the existence of an asymptotically stable canard cycle. 
One can now apply standard theory for limit cycles to conclude that
canard cycles depend smoothly on parameters if $\eps>0$.
Moreover, all canard cycles must be in an exponentially small wedge
of the parameter plane around the curve $\mu_c(\eps)$ corresponding to a connection from
$S_{+,\eps}$ to $S_{r,\eps}$. By Lemma \ref{lem-inclu} $\mu_c(\eps)$
depends smoothly  on $\eps$ and other parameters, also in the limit $\eps\to 0$. 
\end{proof}

We define $p_{\rm can}\in\Delta$ the unique intersection point of $\Delta$ with a canard cycle
(whenever such an intersection exists). We now characterize how $p_{\rm can}$ and its derivatives with respect to the regular parameters and $\eps$
behave as $\eps\to 0$.

\begin{prop}\label{prop-smep0}
The point $p_{\rm can}$ is uniformly $O(e^{-c/\eps})$ close to $p_{+,\eps}$, where $c>0$ is a fixed constant.
Similarly any derivatives of $p_{\rm can}$ with respect to $\eps$ and regular parameters are
uniformly  $O(e^{-c/\eps})$ close to the corresponding derivatives of $p_{+,\eps}$.  An analogous statement holds for higher order
derivatives.
\end{prop}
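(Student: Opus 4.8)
The plan is to realize $p_{\rm can}$ as the unique fixed point of the first--return map $P$ built in the proof of Theorem~\ref{th-canoh}, to show that $P$, together with all its derivatives in the section variable and in $(\mu,\eps)$, is $O(e^{-c/\eps})$--close to the constant map $q\mapsto p_{+,\eps}$, and then to conclude by a routine implicit--function argument.

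First I would record the structure of $P$. For $(\mu,\eps)$ in the wedge around $\mu_c(\eps)$ in which a canard cycle exists, the proof of Theorem~\ref{th-canoh} produces a small neighbourhood $U\subset\Delta$ of $p_{+,\eps}$ on which $P$ is defined, maps $U$ into itself, and is an exponential contraction, so $\mathrm{Lip}(P)=O(e^{-c/\eps})$ and $p_{\rm can}$ is its unique fixed point in $U$. The return orbit of $q\in U$ splits into three stages: (a) the passage through the canard region, carried by the flow on the finite--dimensional centre manifold $C_\eps$ of Lemmas~\ref{lem-exten}--\ref{lem-inclu} along $W^u(S_{r,\eps})$, which by \cite{krupa2001extending,krupa-szmolyan:01} is a diffeomorphism onto a section transverse to a fast connection of {\bf (H3)}, with all data smooth and uniform in $(\mu,\eps)$; (b) the global excursion through the fast jump of {\bf (H3)}, the slide along $S_{-,\eps}$ and, in the head case, the regular passage near the fold $(x_M,y_M)$ --- all standard and contributing only $C^k$--bounded, smoothly parameter--dependent maps; and (c) the final re--attraction to $S_{+,\eps}$ and the subsequent slow drift along $S_{+,\eps}$ back to $\Delta$. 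Stage (c) occupies a time $T\ge\mathrm{const}/\eps$, since $y'=\eps g$ with $g$ bounded away from $0$ on $S_{+,\eps}$, and along $S_{+,\eps}$ the transverse linearization has spectrum uniformly bounded to the left of the imaginary axis by {\bf (H2)} and the exponential--dichotomy estimates of appendix~\ref{append:fenichel}.

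The core of the argument is the bound $\|P(\cdot;\mu,\eps)-p_{+,\eps}(\mu,\eps)\|_{C^k}=O(e^{-c/\eps})$, the $C^k$ norm being taken jointly in the section variable and in $(\mu,\eps)$. For the $C^0$ part, the orbit of any $q\in U$ enters a fixed neighbourhood of $S_{+,\eps}$ at the end of stage (b) and is then, by the uniform transverse contraction of stage (c) over time $T\ge\mathrm{const}/\eps$, driven $O(e^{-c/\eps})$ close to $S_{+,\eps}$; since $S_{+,\eps}\cap\Delta=\{p_{+,\eps}\}$ and $\Delta$ is transverse to $S_{+,\eps}$ at $p_{+,\eps}$ uniformly in the parameters, the crossing point $P(q)$ is $O(e^{-c/\eps})$ close to $p_{+,\eps}$. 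For the derivatives one differentiates the variational equations along the stage--(c) orbit: the linearized semiflow transverse to $S_{+,\eps}$ over time $T$ contracts by $O(e^{-cT})=O(e^{-c/\eps})$ via a Gronwall estimate built on the uniform spectral gap, and the same survives differentiation in $(\mu,\eps)$ because $S_{+,\eps}$ and the dichotomy projectors depend $C^k$ on the parameters with bounded derivatives (appendix~\ref{append:fenichel}); composing with the $C^k$--bounded maps of stages (a)--(b) gives the claim.

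Finally I would conclude. Put $Q(q;\mu,\eps)=P(q;\mu,\eps)-p_{+,\eps}(\mu,\eps)$ and $\delta=p_{\rm can}-p_{+,\eps}$; the fixed--point equation reads $\delta=Q(p_{+,\eps}+\delta;\mu,\eps)$ with $\|Q\|_{C^k}=O(e^{-c/\eps})$, in particular $\|D_qQ\|=O(e^{-c/\eps})$, whence $\|\delta\|_{C^0}=O(e^{-c/\eps})$. Differentiating in $\mu$ gives $(I-D_qQ)\,\partial_\mu\delta=D_qQ\,\partial_\mu p_{+,\eps}+\partial_\mu Q$; since $\partial_\mu p_{+,\eps}=O(1)$ by the smooth parameter dependence in Theorem~\ref{thm-fen} and $(I-D_qQ)$ is uniformly invertible, $\partial_\mu\delta=O(e^{-c/\eps})$, i.e.\ $\partial_\mu p_{\rm can}$ is $O(e^{-c/\eps})$--close to $\partial_\mu p_{+,\eps}$; the same computation in $\eps$, and its iteration, yields the statements for the $\eps$--derivative and for higher derivatives. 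I expect the main obstacle to be stage (c) in the infinite--dimensional phase space $X=C([-h,0],\R^n)$: obtaining the exponential contraction of the return map \emph{and of all its parameter--derivatives} through the long passage along $S_{+,\eps}$ requires the exponential dichotomy along the stable slow manifold to have constants uniform in the base point and in $(\mu,\eps)$ and to vary $C^k$--smoothly with the parameters, so the estimates of appendix~\ref{append:fenichel} (in the spirit of \cite{hupkes-standstede:10}) must be carried through with derivatives; stages (a) and (b) are finite--dimensional and already covered by \cite{krupa2001extending,krupa-szmolyan:01}.
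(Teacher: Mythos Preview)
Your approach is correct but follows a different route from the paper. You argue analytically: decompose the return orbit, use the long contraction along $S_{+,\eps}$ in stage~(c) to show that $P$ and all its parameter-derivatives are $O(e^{-c/\eps})$-close to the constant map $q\mapsto p_{+,\eps}$, and then close with a fixed-point/implicit-function computation. The paper instead argues geometrically through the non-uniqueness of Fenichel slow manifolds: it shows that the forward orbit of $p_{+,\eps}$ under $\Pi$, once it returns to a neighbourhood of $S_+$, is \emph{itself} an admissible choice of $S_{+,\eps}$ (after a local modification of the flow near $S_{r,\eps}$, in the spirit of Lemma~\ref{lem-exten}), and is therefore automatically smooth in $(\mu,\eps)$ by the Fenichel construction of appendix~\ref{append:fenichel}; since any two choices of slow manifold, together with their derivatives, differ by $O(e^{-c/\eps})$, this yields the estimate for $\Pi(p_{+,\eps})$, and iterating together with the uniform contraction of $\Pi$ gives the result for $p_{\rm can}=\lim_k\Pi^k(p_{+,\eps})$. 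Your direct route is more self-contained and closer to standard hyperbolic-dynamics arguments, but, as you correctly flag, it requires carrying variational estimates (including $\eps$-derivatives) through the infinite-dimensional passage along $S_{+,\eps}$; the paper's trick sidesteps these estimates entirely by reducing the smoothness question to the already-established smoothness of $S_{+,\eps}$. One caution on your write-up: stage~(a), the passage along $S_{r,\eps}$, is \emph{not} $C^k$-bounded uniformly in $\eps$ on its own---transverse derivatives there grow like $e^{R_p(y_*)/\eps}$---so the phrase ``composing with the $C^k$-bounded maps of stages (a)--(b)'' is inaccurate; what you actually need, and what {\bf (H4)} provides, is that the expansion in~(a) is dominated by the contraction in~(c), so that the \emph{composition} is still an $O(e^{-c/\eps})$ contraction.
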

\begin{proof}
We treat the case of canards without head, the other case is similar. Our results developed to this point
give smooth dependence of $p_{+,\eps}$ on regular parameters and $\eps$ in the limit $\eps\to 0$.
We claim that this assertion holds for $\Pi^k(p_{+,\eps})$, for any positive $k$, and the derivatives of $\Pi^k(p_{+,\eps})$
are exponentially close to the derivatives of $p_{+,\eps}$. An analogous statement about $p_{\rm can}$ now
follows from the following facts: $\Pi^k(p_{+,\eps})$ converges to $p_{\rm can}$ and $\Pi$ is a uniform contraction.
A similar argument can be applied to higher derivatives..

To see that the claim holds first note that the trajectory of $p_{+,\eps}$ is contained in $W^u(S_{r,\eps})$,
which is a 2D smooth manifold. Smoothness is assured to some point $\tilde p_{+,\eps}$, where the
trajectory leaves the vicinity of $S_{r,\eps}$. Now it is possible to adapt the methods developed above
to prove that the forward trajectory of $\tilde p_{+,\eps}$ gives a good choice of $S_{+,\eps}$.
To do this we can first modify the flow near $S_{r,\eps}$ so that $\tilde p_{+,\eps}$ is in a one dimensionnal 
unstable manifold of a true equilibrium on $S_{r,\eps}$ and the forward trajectory of $\tilde p_{+,\eps}$
is unchanged. We can now extend this unstable manifold, using a similar approach as in the proof
of the existence of a slow manifold, using the contraction near the equilibrium to prove smoothness.
Finally, we can extend this construction to the vicinity of $S_+$ using a variant of the trick used in 
Lemma \ref{lem-exten}. We use a smooth partition of unity (depending on $t$) to define a linear
operator which during the passage from $\tilde p_{+,\eps}$ is given by the linearization
along the unstable manifold and near $S_{a}$ is given by \eqref{eq-lin1}. It now follows that this trajectory gives a choice
of $S_{+,\eps}$ and the claim on smoothness follows for $k=1$. Now we apply a similar argument as in Lemma \ref{lem-inclu}
finding a smooth center manifold $C_\eps$ and a smooth manifold $W^u(S_{r,\eps}$ which contain the extension of  $S{+,\eps}$.
It follows that $\Pi(p_{+,\eps})$ is on the modified manifold $C_\eps$, so that we can apply the same argument to prove an analogous
result for $k=2$. Proceeding by induction we obtain the result for all $k$. The result on the estimate of the derivatives of $\Pi^k(p_{+,\eps})$
follows from the fact that each such point corresponds to a choice of $S_{+,\eps}$ and the derivatives of $S_{+,\eps}$ obtained by different constructions
cary by an exponentially small amount.
\end{proof}
If (H4) does not hold it is possible to obtain partial results, based on the following.
\begin{theorem}\label{th-canloc}
Suppose that {\bf (H1)-(H3)} holds and $R_{n,+}(y_*) >R_p(y_*)$ for some $y_*\in (y_m, y_M)$.
Then there exists a smooth curve in the parameter plane of the form $(\mu(\eps),\eps)$
corresponding to the locus of existence of canard cycles with no head passing through the point $(x_m, y_*)$.
The canard cycles belonging to this family are asymptotically stable and 
depend smoothly on $\eps$. Similarly, suppose that $R_{n,-}(y_*)+R_{n,+}(y_M) >R_p(y_*)$ for some $y_*\in (y_m, y_M)$.
Then there exists a smooth curve in the parameter plane of the form $(\mu(\eps),\eps)$
corresponding to the locus of existence of canard cycles with head passing through the point $(x_M, y_*)$.
The canard cycles belonging to this family are asymptotically stable and 
depend smoothly on $\eps$.
\end{theorem}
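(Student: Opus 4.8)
The plan is to follow the proof of Theorem~\ref{th-canoh} verbatim up to the point where hypothesis {\bf (H4)} was invoked, and then to replace the global contraction argument by a \emph{localized} one around one particular canard cycle. First I would use Lemmas~\ref{lem-exten} and~\ref{lem-inclu} to fix a center manifold $C_\eps$ that contains a segment of $S_{+,\eps}$, extends along $W^u(S_{r,\eps})$ up to a neighborhood of the fold $(x_M,y_M)$, and — for the statement on canards with head — also contains a segment of $S_{-,\eps}$, using in addition the center-manifold reduction at the fold supplied by Proposition~\ref{prop-cm} together with the connections of {\bf (H3)}. On $C_\eps$ the reduced system~\eqref{eq-delsfred} is an ordinary two-dimensional slow--fast system with a non-degenerate canard point and fold point, so that the planar analysis of~\cite{krupa-szmolyan:01,krupa2001extending} applies directly, and the rest of the argument takes place on $C_\eps$.

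Next I would produce the curve $(\mu(\eps),\eps)$. Working on $C_\eps$, the standard picture at a canard point says that the forward continuations of $S_{+,\eps}$ form, as $\mu$ ranges over the exponentially thin wedge around the maximal-canard curve $\mu_c(\eps)$ of Lemma~\ref{lem-inclu}, a one-parameter family of canard cycles whose \emph{amplitude} — the largest value of $y$ reached along $S_{r,\eps}$ before the jump — depends strictly monotonically on $\mu$; headless canards are those whose jump returns directly to the attracting branch feeding the canard region, canards with head those whose jump crosses over, runs down the opposite branch $S_{-,\eps}$ to the fold $(x_M,y_M)$, and comes back through $S_{+,\eps}$. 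Note that {\bf (H1)}--{\bf (H3)} already provide the whole family; the pointwise inequality will enter only in the stability step. Requiring that the amplitude equal $y_*$ then selects, by the implicit function theorem, a unique $\mu=\mu(\eps)$, and smoothness of $\mu(\cdot)$ down to $\eps=0$ follows from the smooth $\eps$-dependence of $C_\eps$, of $\mu_c(\eps)$, and of the forward flow, exactly as in Lemma~\ref{lem-inclu} and Proposition~\ref{prop-smep0}. For that $\mu$ the selected cycle meets the section $\Delta=\{x_c=x_m\}$ at height $y_*$, i.e. at $(x_m,y_*)$ (in the head case it is more convenient to record the crossing at a section $\{x=x_M\}$ near the fold, giving the point $(x_M,y_*)$ of the statement).

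For asymptotic stability I would estimate the transverse derivative of the Poincar\'e map $\Pi$ of that section near the crossing point. Splitting one turn into passage along the attracting branch(es), the fast jump(s), and the passage along $S_{r,\eps}$ near the canard point, the transverse linearization contracts along $S_{+,\eps}$ (resp. $S_{-,\eps}$) at the rate set, to leading order in $1/\eps$, by the leading characteristic root $\lambda_{n,+}$ (resp. $\lambda_{n,-}$), every other characteristic root on a stable branch contracting faster, while along $S_{r,\eps}$ it expands at rate $\lambda_p$ in the one unstable direction, which lies in $C_\eps$, and contracts strongly in all remaining directions. Integrating these rates against the slow drift $g$ as in~\cite{krupa-szmolyan:01} yields, up to exponentially small terms, $D\Pi \approx \exp\!\big(-\tfrac1\eps(R_{n,+}(y_*)-R_p(y_*))\big)$ in the headless case and $D\Pi \approx \exp\!\big(-\tfrac1\eps(R_{n,-}(y_*)+R_{n,+}(y_M)-R_p(y_*))\big)$ in the head case; the pointwise hypothesis makes the exponent negative, so $\Pi$ maps a small neighborhood of the crossing point strictly into itself and its unique fixed point is an asymptotically stable canard cycle of amplitude $y_*$. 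Smooth $\eps$-dependence of this cycle then follows from smoothness of $\Pi$ together with the contraction argument of Proposition~\ref{prop-smep0}.

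I expect the main obstacle to be the \emph{localized} entry--exit estimate: one must check that, in the infinite-dimensional setting, the transverse contraction of $\Pi$ near the selected cycle is controlled \emph{only} by the finite-dimensional integrals $R_{n,\pm}$ and $R_p$ — that is, the infinitely many extra directions along the stable and saddle branches (strongly contracting by the spectral gap guaranteed by {\bf (H2)}) neither weaken the estimate nor interfere near the non-hyperbolic points, where one works instead on the two-dimensional $C_\eps$. This reduction is legitimate along the specific orbit thanks to the cut-off constructions already built into Lemmas~\ref{lem-exten} and~\ref{lem-inclu}, so that, once the amplitude is used as the bifurcation parameter, the estimate becomes a transcription of the planar one; making that transcription airtight — and, in the head case, likewise handling the passage near the fold $(x_M,y_M)$ — is the only genuine work beyond what Theorem~\ref{th-canoh} and Proposition~\ref{prop-smep0} already supply.
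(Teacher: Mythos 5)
Your proposal is correct and follows essentially the same route as the paper, whose entire proof of this theorem is the single sentence that it is ``analogous to the proof of Theorem~\ref{th-canoh}''; you have simply made that analogy explicit (localizing the Poincar\'e-map contraction at the cycle of amplitude $y_*$ selected via the wedge around $\mu_c(\eps)$, with the way-in/way-out exponents $R_{n,+}(y_*)-R_p(y_*)$ and $R_{n,-}(y_*)+R_{n,+}(y_M)-R_p(y_*)$), which is exactly the intended argument. Your closing caveat about controlling the infinitely many strongly contracting transverse directions is well placed, and is already covered by the constructions of Lemmas~\ref{lem-exten} and~\ref{lem-inclu} as you note.
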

\begin{proof}The proof is analogous to the proof of Theorem \ref{th-canoh}. 
	\end{proof}
\begin{rem}\label{rem-width}
Note that the condition $R_{n,-}(y_*)+R_{n,+}(y_M) >R_p(y_*)$ must be satisfied 
for $y_*$ sufficiently small. This implies that canards with `large head' must exist and be stable.
Similarly, canard explosion is locally always either subcritical or supercritical
if a non degeneracy condition holds (see \cite{krupa-szmolyan:01}). Finally, all canard cycles whose existence
follows from Theorem \ref{th-canloc}
are exponentially close in the parameter space to a segment of the canard solution
and therefore the $\mu(\eps)$ values they correspond to must be exponentially close to
$\mu_c(\eps)$. This means that there exists a weak version of a  canard explosion even if {\bf (H4)} does not
hold, namely a transition from small cycles to canard cycles with `large head' which occurs in an exponentially small
region.
\end{rem}

\section{Application to the delayed van der Pol system}\label{sec:CanardvdP}
We now investigate the presence of canard explosions in the delayed van der Pol (vdP) equation. This model, motivated by the analysis of firings of neurons, is given by the equations (see~\cite{krupa-touboul:14b}):
\begin{equation}\label{eq:vdpdelay}
	\begin{cases}
			x'_t=x_t-\frac{x_t^3}{3}+y_t+ J (x_t-x_{t-\tau})\\
			y_t'=\varepsilon (a-\,x_t).
	\end{cases}
\end{equation}
In that model, modification of the mean-field limit of a Fitzhugh-Nagumo system, $x$ represents the voltage of a cell and $y$ is a slow adaptation variable, $J$ represents the average coupling strength between neurons, $\tau$ the average delay of communication between the cells and $a$ is related to the input received by the neurons. When $\tau=0$, one recovers the classical vdP system. This equation has been the first example of system with canard explosion~\cite{benoit-etal:81}: for $a>1$, the system has a unique, globally attractive fixed point, that looses stability at $a=1$ through a supercritical Hopf bifurcation. A family of limit cycles emerges starting with small amplitude Hopf like periodic orbits that rapidly turn into relaxation cycles as $a$ is decreased, through canards with no head and then canards with head. 

We shall consider now the effect of the delay on the canard transition. We show that, as an application domain theorems~\ref{th-canoh} and~\ref{th-canloc}, this system displays canard explosion as delays are varied, for $J\tau<1$. For $J\tau >1$, the instability created by the delay interacts with the instability given by canard explosion leading to much more complex dynamics~\cite{krupa-touboul:14b}.

\subsection{Equilibria and stability of the fast equation}\label{sec:EquilibriaFast}
The fast equation is given by the solution of the singular limit $\varepsilon\to 0$ in equation~\eqref{eq:vdpdelay}, i.e.:
\begin{equation}\label{eq:FastDelayed}
			x'_t=x_t-\frac{x_t^3}{3}+y+ J (x_t-x_{t-\tau})
\end{equation}
where $y$ is considered as a parameter, corresponding to the value of $y_t$, which is constant in the singular limit. Fixed points are given by the solutions to the algebraic equation:
\[x-\frac{x^3}{3}+y=0,\]
which has three real solutions (fixed points) when $\vert y \vert < \frac 2 3$ and one fixed point otherwise. Note that the solutions to this equation constitute the critical manifold of~\eqref{eq:vdpdelay}. Fixed points can be written in closed from using Cardano's method. For $\vert y\vert >2/3$, the unique solution is given by:
\[x_0=\left(\frac{3y+\sqrt{9 y^2 -4}}{2}\right)^{1/3}+\left(\frac{3y-\sqrt{9 y^2 -4}}{2}\right)^{1/3}\]
and for $\vert y \vert < 2/3$, the three solutions are given by
\[x_k=2\cos\left(\frac 1 3 \arccos\left(\frac{3y}{2}\right) + \frac{2k \pi}{3}\right) \qquad, \qquad k=0,1,2.\]
and for $ y=\pm2/3$, there is a double root $x=\mp 1$ and a simple root $x=\pm 2$. There are hence three branches of fixed points: $x_+(y)\geq 1$ corresponding to the branch of solutions for $y\geq -2/3$, $x_-(y)\leq -1$ corresponding to $y\leq 2/3$ and $x_0(y) \in [-1,1]$ defined for $y\in[-2/3,2/3]$. This manifold is displayed in Fig.~\ref{fig-S}. 

We now show that $x_0(y)$ is a saddle with one unstable direction, and $x_{\pm}(y)$ are stable as long as $J\tau <1$.  In order to prove this, we analyze the characteristic roots $\xi$ of the system, i.e. solutions to~\eqref{eq-defdelta}, reading in our case:
\begin{equation}\label{eq:CharactRootsFast}
	\xi=1-(x^*)^2 +J -Je^{-\xi\tau}
\end{equation}
This equation may be solved using special functions\footnote{Indeed, the solutions to the characteristic equation are given by the Lambert functions $W_k$ (the different branches of the inverse of $x\mapsto xe^x$, see e.g.~\cite{corless:96}):
\[\xi = A+\frac 1 \tau W_k\left(-\tau J e^{-\tau A}\right)\]
with $A=1-(x^*)^2+J$. The stability of $x^*$ is hence governed by the sign of the real part of the rightmost eigenvalue, given by the real branch $W_0$ of the Lambert function, and if the argument of the Lambert function has a real part greater than $-e^{-1}$ the root is unique. If not, we have two eigenvalues with the same real part corresponding to $k=0$ or $-1$.}. Since for $\tau=0$, $x_0$ is a saddle with a single unstable direction and $x_{\pm}(y)$ are stable, we only need to show that there is no delay-induced bifurcation for $J\tau<1$. First, saddle node bifurcation arise when there exist characteristic roots $\xi$ equal to $0$: this occurs if and only if $x^*=\pm 1$, i.e. $y=\pm \frac 2 3$, independently of the delay $\tau$ and the coupling strength $J$. 

Hopf bifurcations in the fast system occur when $\xi=\mathbf{i}\zeta$ for $\zeta>0$. In that case, taking the imaginary part of the dispersion relationship, we obtain:
\[\zeta=J\sin(\zeta\tau), \qquad \text{i.e.} \qquad J\tau = \frac{\zeta \tau}{\sin(\zeta \tau)}\geq 1.\]
Therefore, the fast system does not undergoes Hopf bifurcations as long as $J\tau \leq 1$. 

We conclude that the stability of the branches of the critical manifold is as for $\tau=0$ as long as $J\tau <1$, ensuring validity of \textbf{(H1)} and \textbf{(H2)}. 
\subsection{Local canard explosion}
Due to center manifold reduction (proposition~\ref{prop-cm}), the analysis of local canard explosion is similar as for a system in two dimensions as performed in~\cite{krupa-szmolyan:01}. The existence of a global
canard explosion relies on the hypotheses {\bf (H3)-(H4)} that will be checked in the next sections, and the present section is devoted to the local canard explosion.

It follows from the calculations in Section \ref{sec:EquilibriaFast} that the line line segment $a=1$, $0\le \tau<1/J$ in the parameter space $(a,\tau)$ corresponds to the locus of canard points. To understand the details of local canard explosion we derive the reduction of \eqref{eq:vdpdelay} to a center manifold at the canard point,
see Section \ref{sec:cmgen} for a general description of such a reduction. To carry out the reduction we use the fact that a canard point is a special case of a 
a Bogdanov-Takens point. Further we observe that  \eqref{eq:vdpdelay} has the same structure as (3.8) in \cite{campbellJDDE}, hence we obtain our normal
form by following closely the approach of \cite{campbellJDDE}. In addition we take advantage of the fact that 
our nonlinearity is independent of the delay (see~\cite[Appendix]{krupa-touboul:14b} for a similar
reduction, in the context of Hopf bifurcation in \eqref{eq:vdpdelay}). The reduction followed by a reflection in the $\tilde x$ variable, yields the following system on the center manifold:
\begin{equation}\label{eq-syscm}
\begin{cases}
	\der{\tilde{x}}{\theta} =  -\tilde{y}+\tilde{x}^2-\frac{\tilde{x}^3}{3}+a_1\eps\tilde x+ {\rm hot}\\
	\der{\tilde{y}}{\theta} = \tilde{\varepsilon} (\tilde{x}-\tilde{a}+{\rm hot}),
\end{cases}
\end{equation}
where $\tilde a=1-a$,
\begin{equation}\label{eq-defa1}
a_1=\frac{J\tau^2}{2(1-J\tau)},
\end{equation}
and hot denotes higher order terms which have no influence on qualitative and low order quantitative features of canard explosion \cite{krupa-szmolyan:01}.
If the hot terms are omitted \eqref{eq-syscm} differs from the classical van der Pol system by the term $a_1\eps\tilde x$. Note that $a_1$ is positive
and blows up as $\tau$ approaches $1/J$.
The coefficient $a_1$ does not influence the coefficient $A$ defined in \cite{krupa2001extending}, which determines the criticality of canard explosion.
The feature changed by $a_1$ is the position of the Hopf and canard curves in the $(a, \eps)$ plane. For $\tau=0$ the Hopf curve is 
is given by $a=1$ and the canard curve is in the half plane $a<1$. As $\tau$ increases, the two curves turn to the right and eventually are both located
in the $a>1$ half plane. This feature allows for a very interesting version of a canard explosion: starting with $a<1$ and $\tau=0$ one can follow the 
evolution of the stable limit cycle as $\tau$ is increases while $a$ is kept fixed. Due to the movement of the canard line the parameter point $(a,\tau)$
approaches and eventually passes through the canard line, which gives a canard explosion. This is shown in Figure \ref{fig:fullCanard}.

\subsection{Existence of connections}
In order to complete our proof of the presence of canards explosions, we now investigate the persistence of connections, in the fast system, from the saddle fixed point to one branch of the critical manifold (hypothesis {\bf (H3)}). To this end, one needs to show a global convergence result for a one-dimensional delayed differential equation, which is a complex problem. Ample numerical simulations (see Fig.~\ref{fig:Connections}) show that such connections exist when $J=2$ and $0<\tau <1/J$. 
\begin{figure}[htbp]
	\centering
		\subfigure[$y=0$]{\includegraphics[width=.45\textwidth]{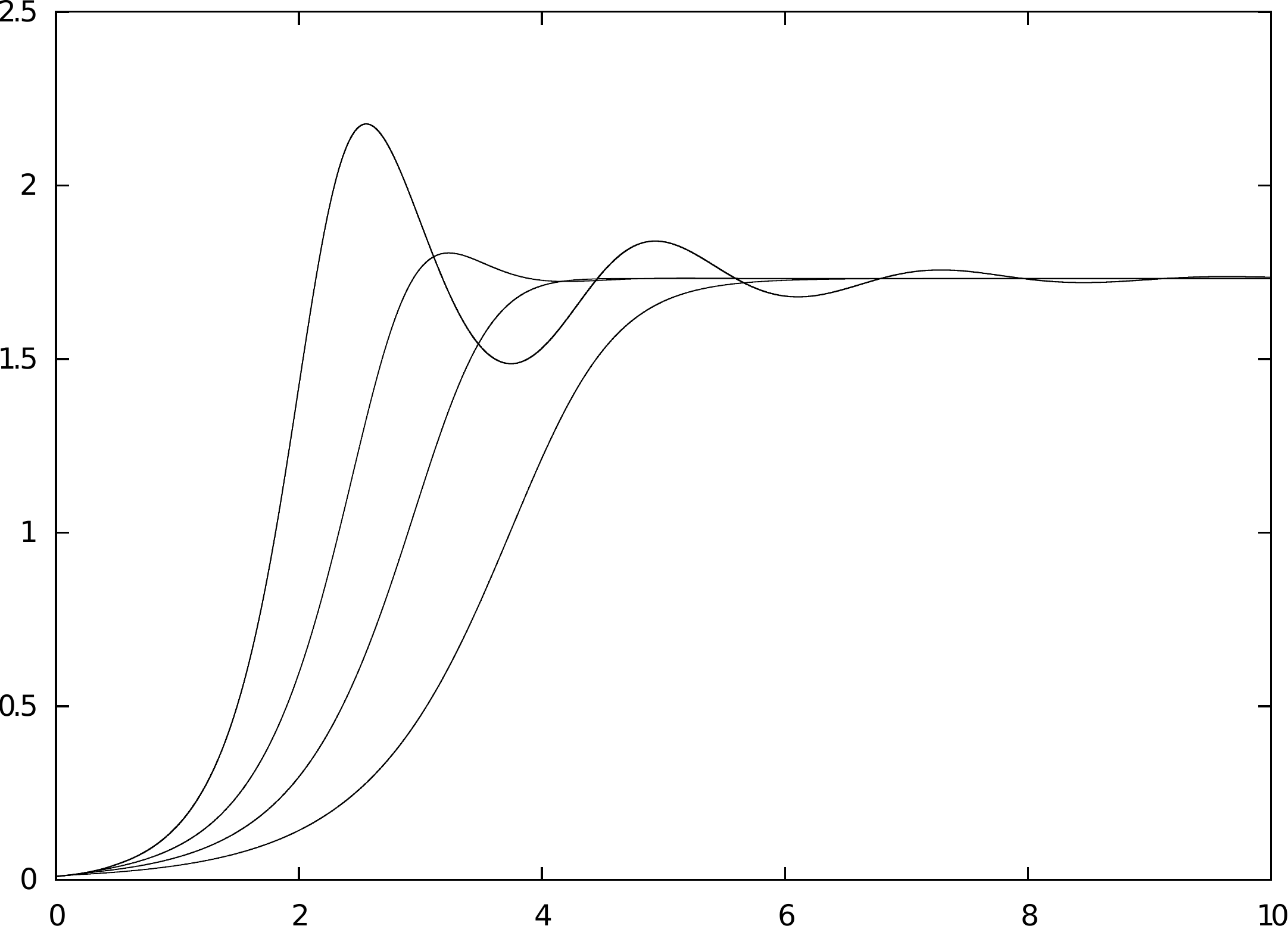}}
		\subfigure[$\tau=0.4$]{\includegraphics[width=.45\textwidth]{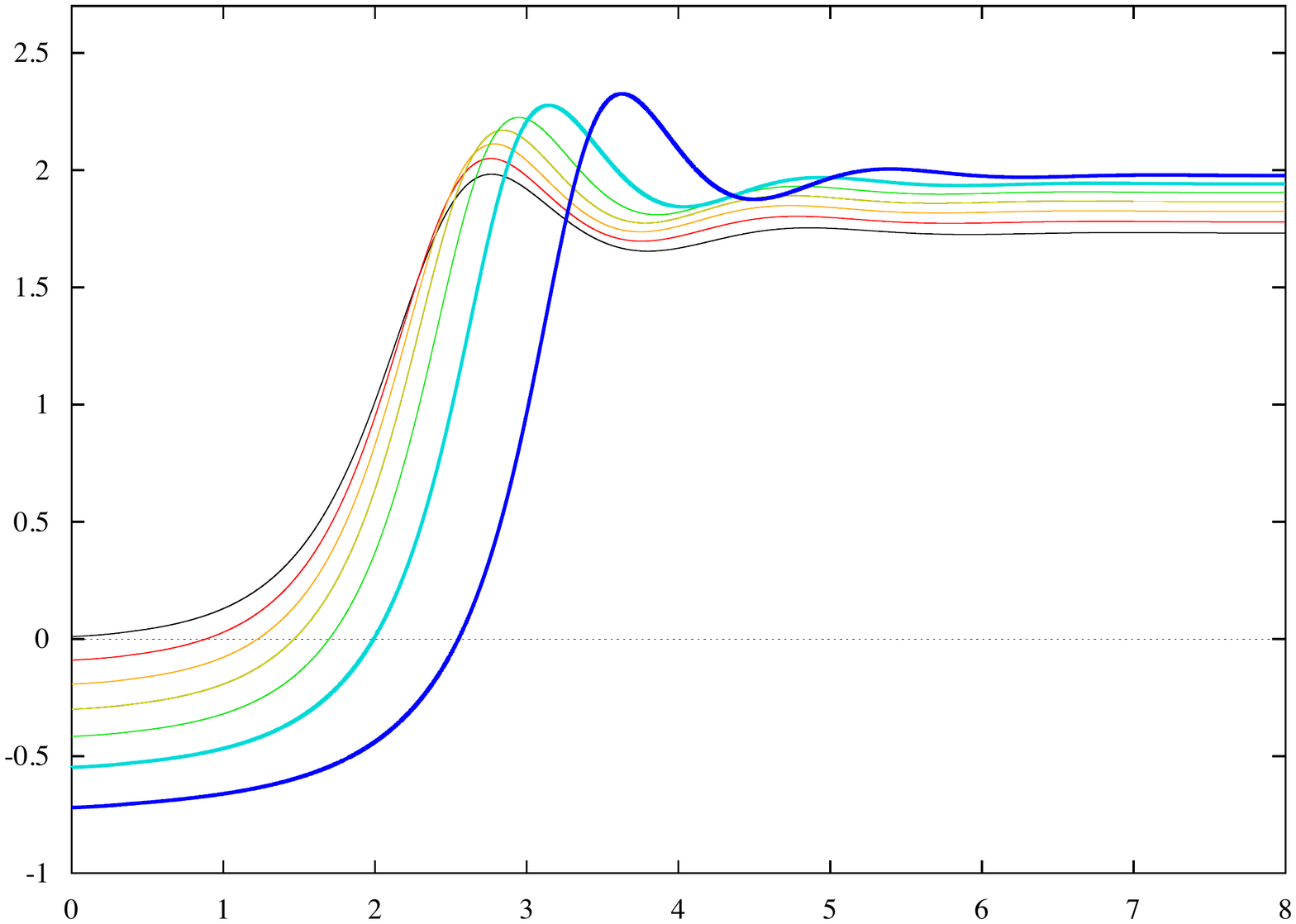}}
	\caption{Persistence of connections for $J\,\tau<1$. Here, $J=2$ fixed. (a) $y=0$ and different values of $\tau$ (from right to left, $\tau=0.1,\,0.2,\,0.3,\,0.4$). (b) $\tau=0.3$ fixed, and $y$ ranging from $0$ to $0.6$ (steps $0.1$). Initial condition was always set to $x_t=x_0$ for $t\in [-\tau,0]$ and $x(0)=x_0+0.01$ (where $x_0$ is the saddle fixed point). We observe in (a) that the connection persists for increasing values of $\tau$, but the attractivity of the fixed point decays, and in (b) that the connections persist all along the branch of unstable fixed points.}
	\label{fig:Connections}
\end{figure}
%
While we conjecture that connections persist for any $J\tau<1$, we demonstrate:
\begin{prop}\label{pro:Connections}
	Connections from the saddle fixed point to one of the stable fixed points exist if $J\tau<\frac 1 2 $.
\end{prop}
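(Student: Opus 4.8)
The plan is to establish the connection by a direct, elementary monotonicity argument exploiting the special structure of the one-dimensional fast equation \eqref{eq:FastDelayed}, rather than invoking any infinite-dimensional machinery. Fix $y$ in the relevant range $|y|<2/3$, let $x_0=x_0(y)$ be the saddle fixed point and $x_+(y)$ (say) the stable fixed point lying above it; by the computation in Section~\ref{sec:EquilibriaFast} the linearization at $x_0$ has a single positive characteristic root $\lambda_p$, so there is a one-dimensional unstable set emanating from $x_0$. First I would take the initial history $x_t\equiv x_0+\delta$ on $[-\tau,0]$ with $\delta>0$ small, which lies on the unstable side, and study the solution $x(t)$ of \eqref{eq:FastDelayed} for $t\ge 0$. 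The goal is to show $x(t)\uparrow x_+(y)$ monotonically.

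The key observation is that for the scalar equation
\[
x'_t=F(x_t)+y+J\bigl(x_t-x_{t-\tau}\bigr),\qquad F(x)=x-\tfrac{x^3}{3},
\]
one can rewrite the right-hand side as $G(x_t)+J\!\int_{t-\tau}^{t}\! x'(s)\,ds$ where $G(x)=F(x)+y$, i.e. the delay term equals $J$ times the increment of $x$ over the last interval of length $\tau$. The central step is a \emph{quantitative monotonicity lemma}: I claim that as long as $x(s)$ is increasing on $[0,t]$, it remains increasing and stays below $x_+(y)$. To prove this, suppose $t_\star>0$ is the first time $x'(t_\star)=0$ with $x$ increasing before it. At $t_\star$ we have $0=x'(t_\star)=G(x(t_\star))-J\bigl(x(t_\star)-x(t_\star-\tau)\bigr)$; since $x$ is increasing, $0\le x(t_\star)-x(t_\star-\tau)=\int_{t_\star-\tau}^{t_\star}x'(s)\,ds\le \tau\,\sup_{[t_\star-\tau,t_\star]}x'$. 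Bounding $\sup x'$ in turn by $\sup G$ over the relevant interval plus $J\tau\sup x'$ gives $\sup_{[0,t_\star]} x' \le \frac{1}{1-J\tau}\sup G$, and feeding this back shows $x(t_\star)-x(t_\star-\tau)\le \frac{J\tau}{1-J\tau}\cdot\frac{?}{}$ — here is precisely where the hypothesis $J\tau<\tfrac12$, i.e. $\frac{J\tau}{1-J\tau}<1$, is used: it forces the delay contribution $J(x(t_\star)-x(t_\star-\tau))$ to be dominated by $G(x(t_\star))$ on the segment $x_0<x<x_+(y)$ where $G>0$, so $x'(t_\star)>0$, a contradiction. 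Hence $x$ is increasing for all $t\ge 0$, bounded above by $x_+(y)$ (the same sign bookkeeping rules out crossing $x_+(y)$), therefore converges; the limit must be a fixed point, and being $>x_0$ it can only be $x_+(y)$. This is the connection.

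The main obstacle I anticipate is making the ``sign bookkeeping'' in the previous paragraph fully rigorous and uniform: one must control $\sup_{s\le t} x'(s)$ by a Gronwall-type bootstrap that closes only because $J\tau<\tfrac12$, and one must be careful that the bound on $G$ used is the bound over the range of $x$-values actually attained (which is exactly $[x_0+\delta,\,x_+(y)]$, where $G$ is positive and bounded), not a global bound. A clean way to organize this is to define $M(t)=\sup_{[0,t]}x'$ and $g^\star=\sup_{x\in[x_0,x_+(y)]}G(x)$, derive $M(t)\le g^\star + J M(t)$ hence $M(t)\le g^\star/(1-J\tau)$ \emph{a priori} on any interval where $x$ stays in $[x_0,x_+(y)]$, then show this interval is all of $[0,\infty)$ by the contradiction argument above, now with the explicit estimate $x(t_\star)-x(t_\star-\tau)\le \tau M \le \frac{\tau g^\star}{1-J\tau}$ and $J\cdot\frac{\tau g^\star}{1-J\tau}<g^\star$ when $J\tau<\tfrac12$. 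The symmetric fixed point $x_-(y)$ and the history on the other unstable side are handled identically. One final point to verify: that the chosen initial history genuinely lies on the unstable side of $x_0$ (so that $x$ does leave a neighborhood of $x_0$ rather than staying put) — this follows from the positivity of $\lambda_p$, equivalently $1-(x_0)^2>0$, established in Section~\ref{sec:EquilibriaFast}.
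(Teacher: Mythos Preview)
Your argument rests on a sign slip that hides the real difficulty. In \eqref{eq:FastDelayed} the delay term enters with a plus sign, $x'=G(x_t)+J(x_t-x_{t-\tau})$, so on any interval where $x$ is increasing and lies in $(x_0,x_+)$ both summands are nonnegative and the first is strictly positive: $x'>0$ there is automatic, and no comparison between $J(x_t-x_{t-\tau})$ and $G$---hence no hypothesis on $J\tau$---is needed to exclude a first zero $t_\star$ of $x'$ inside that interval. What your scheme does \emph{not} establish is the claim ``bounded above by $x_+(y)$''. If the trajectory ever reaches $x_+$ at a finite time $t_1$, then $x'(t_1)=J\bigl(x_+-x(t_1-\tau)\bigr)>0$ and it crosses; and for a range of parameters with $J\tau<\tfrac12$ this actually happens, since the leading characteristic roots of \eqref{eq:CharactRootsFast} at $x_+$ are complex (e.g.\ $J=2$, $\tau=0.2$, $y$ close to $2/3$: the argument $-J\tau e^{-\tau(1-x_+^2+J)}$ of the Lambert $W_0$ drops below $-1/e$), forcing an oscillatory approach to $x_+$. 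Your ``same sign bookkeeping rules out crossing $x_+(y)$'' therefore cannot be made to work in general. Even taking your sign at face value the domination step fails for a second reason: your estimate bounds $J(x(t_\star)-x(t_\star-\tau))$ by $\rho\, g^\star$ with $\rho=J\tau/(1-J\tau)$, but what you would need is a bound by the \emph{pointwise} value $G(x(t_\star))$, which is arbitrarily small near either equilibrium.

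The paper's proof shares your key estimate $J|z(t)-z(t-\tau)|\le \rho\,\psi_{\max}$ (its $\psi$ is your $G$ in coordinates centred at $x_+$, so $\psi_{\max}=g^\star$) but deploys it very differently: it works on the unstable-manifold trajectory, defined for all $t\in(-\infty,\infty)$, and feeds the bound into the Lyapunov function $V(z)=-\int_0^z\psi$, obtaining $\frac{d}{dt}V(z(t))<-\psi(z(t))^2+(\rho\,\psi_{\max})^2$, which drives the solution into the trapping region $\{|\psi(z)|<\rho\,\psi_{\max}\}$ around $x_+$. An iterated refinement of the same estimate then shrinks this region geometrically, giving $z(t)\to 0$. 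This mechanism is indifferent to overshoot and oscillation; a pure monotonicity argument is not. If you wish to salvage your approach you would have to replace the single invariant interval $[x_0,x_+]$ by a nested family of shrinking forward-invariant neighbourhoods of $x_+$---which is essentially the paper's argument in different clothing.
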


\begin{proof}
	We now consider $\vert y \vert < 2/3$, and denote $x_-(y)<x_0(y)<x_+(y)$ the three fixed points of the system. The fixed point $x_0(y)$ was shown to be unstable, while the other two fixed points are linearly stable. We aim at showing that solutions starting on the unstable manifold of the saddle solution $x(t)\equiv x_0(y)$ leave the neighborhood of this solution and converge towards the equilibrium $x(t)\equiv x_+(y)$ as time evolves (the system being symmetric by the transformation $(x,y)\mapsto (-x,-y)$, this case shows existence of connections on both sides of the unstable equilibrium). We define $z=x-x_+(y)$ and introduce the van der Pol potential centered around this point:
	\[V(z)=-\int_0^z \psi(z)\,dz\]
	where $\psi$ is the flow of van der Pol system:
	\[\psi(z) =-\bigg( (-1+x_+^2(y)) z + x_+(y)z^2 +\frac{z^3}{3}\bigg) =: -\bigg(\alpha z + \beta z^2 + \frac{z^3}{3}\bigg).\]
	Let $z(t)$ a solution of the delayed van der Pol system:
	\begin{equation}\label{eq-vdptrans}
	\dot z(t) = \psi(z(t)) + J \Big(z(t)-z(t-\tau)\Big).
	\end{equation}
		\\[1ex]
	We assume $J\tau<1/2$ and define $\rho=J\tau/(1-J\tau)$.
	We gather a few facts on $\psi$:
	\[
	\psi(z)=-((\beta^2-1)z+\beta z^2+\frac{1}{3}z^3,\quad \beta\in (1, 2],
	\]
	and
	\[
	\psi'(z)=-(\beta^2-1)+2\beta z+z^2.
	\]
	Let $z_{max}$ be the local maximum point of $\psi$.
	One can verify by direct computation that
	\begin{equation}\label{eq-zpsimax}
	z_{max}=1-\beta,\quad \psi_{max}=\frac{1}{3}(\beta-1)^2(\beta+2).
	\end{equation}
      We now prove the existence of a trapping region. We begin with the following estimate:
	\begin{align}\label{eq-estflip}
	\begin{split}
	z(t)-z(t-\tau)=&\int_{t-\tau}^tz'(\sigma) d\sigma\\
	                       &=\int_{t-\tau}^t\psi(z(\sigma)d\sigma +J\int_{t-\tau}^t z(\sigma)-z(\sigma-\tau)d\sigma\\
	                       &\le\tau\psi_{\rm max}+J\tau \max_{\sigma\in (t-\tau, t)}  (z(\sigma)-z(\sigma-\tau)).
       \end{split}
	\end{align}
	If $z(t)$ is defined on $(-\infty,\infty)$ then
	\begin{equation*}
	\max_{\sigma\in (-\infty, t)} z(\sigma)-z(\sigma-\tau)\le\tau\psi_{\rm max}+J\tau \max_{\sigma\in (-\infty, t)}  (z(\sigma)-z(\sigma-\tau)).
	\end{equation*}
	It now follows that
	\begin{equation*}
	\max_{\sigma\in (-\infty, t)}J( z(\sigma)-z(\sigma-\tau))\le\psi_{\rm max}\rho<\psi_{max}.
	\end{equation*}
	This implies that
	\[
	\min_{\sigma\in (-\infty, t)} \psi(z(\sigma))\ge-\psi_{\rm max}\rho
	\]
	since $\psi(z(\sigma))=-\psi_{\rm max}\rho$ implies $z'(\sigma)<0$. Hence
	\[
	\max_{\sigma\in (-\infty, t)} |\psi(z(\sigma))|\le\psi_{\rm max}
	\]
	Now we can adapt the earlier argument to estimate $|z(t)-z(t-\tau)|$:
	\begin{align}\label{eq-estflip1}
	\begin{split}
	|z(t)-z(t-\tau)|=&\left |\int_{t-\tau}^tz'(\sigma) d\sigma\right |\\
	                       &\le\left |\int_{t-\tau}^t\psi(z(\sigma)d\sigma\right | +J\left |\int_{t-\tau}^t z(\sigma)-z(\sigma-\tau)d\sigma\right |\\
	                       &\le\tau\psi_{\rm max}+J\tau \max_{\sigma\in (t-\tau, t)}  (|z(\sigma)-z(\sigma-\tau)|).
	\end{split}
	\end{align}
	Continuing as above we obtain
	         \begin{equation*}
	\max_{\sigma\in (-\infty, t)} J |z(\sigma)-z(\sigma-\tau)|<\psi_{\rm max}\rho.
	\end{equation*}
	We now have
	\begin{equation}\label{eq-estrap0}
		\der{}{t}V(z(t)) = -\psi^2(z(t)) + J \psi(z(t))(z(t)-z(t-\tau))<-\psi(z(t))^2+\psi_{\rm max}^2\rho^2.
	\end{equation}
	It follows that the solution must enter the region given by 
	\begin{equation}\label{eq-deftrap}
	\{z\in (z_{\rm max}, -z_{\rm max})\; :\; |\psi(z)|< \psi_{max}\rho\}.
	\end{equation}
	\begin{rem}\label{rem-ref}
	Note that there exists a constant $K>0$ such that, if $z\in(z_{max},-z_{max})$ satisfies $|\psi(z)|<\rho\psi_{max}$ then $|\psi'(z)|>1/K$.
	\end{rem}
	Let $m>0$ be an integer large enough so that $\tilde\rho=\rho+(J\tau)^mK<1$. We now assume that $t$ is sufficiently large so that
	$z(\sigma)$ satisfies \eqref{eq-deftrap} for $\sigma\in [t-m\tau, \infty)$. We now refine the estimate \eqref{eq-estflip1}, applying it iteratively $m$ times,
	to get
	\begin{align}\label{eq-estflip2}
	\begin{split}
	J|z(t)-z(t-\tau)|=&J\left |\int_{t-\tau}^tz'(\sigma) d\sigma\right |\\
	                       &\le J\left |\int_{t-\tau}^t\psi(z(\sigma)d\sigma\right | +J^2\left |\int_{t-\tau}^t z(\sigma)-z(\sigma-\tau)d\sigma\right |\\
	                       &\le\sum_{l=1}^{m}(J\tau)^l\rho\psi_{\rm max}+(J\tau)^m \max_{\sigma\in (t-m\tau, t)}  (|z(\sigma)-z(\sigma-\tau)|).
	\end{split}
	\end{align}
	It follows that 
	\[
	J |z(t)-z(t-\tau)|<\psi_{\rm max}\rho\tilde\rho
	\]
	and consequently
	\begin{equation}\label{eq-estrap1}
		\der{}{t}V(z(t)) = -\psi^2(z(t)) + J \psi(z(t))(z(t)-z(t-\tau))<-\psi(z(t))^2+\psi_{\rm max}^2(\rho\tilde\rho)^2.
	\end{equation}
	for $t$ such that 
	$z(\sigma)$ satisfies \eqref{eq-deftrap} for $\sigma\in [t-m\tau, \infty)$. It follows that the solution must enter the region
	\begin{equation}\label{eq-deftrap1}
	\{z\in (z_{\rm max}, -z_{\rm max})\; :\; |\psi(z)|< \psi_{max}\rho\tilde\rho\}.
	\end{equation}
	Proceeding by induction we prove that for any integer $l>0$ there exists $t$ large enough so that the solution enters the region
	\begin{equation}\label{eq-deftrapl}
	\{z\in (z_{\rm max}, -z_{\rm max})\; :\; |\psi(z)|< \psi_{max}\rho(\tilde\rho)^l\}.
	\end{equation}
	It follows that $z(t)$ converges to $0$.\\[1ex]	%
\end{proof}

\subsection{Stability hypothesis}
\noindent The last property we need to show in order to apply theorem~\ref{th-canoh} is the stability assumption {\bf(H4)}. To this end, we can readily express the contraction/expansion rates given by formula \eqref{eq-defR} thanks to the closed-form expression of the eigenvalues of the system in terms of the Lambert function $W_0$. These expressions were then evaluated numerically. These computations showed that
{\bf (H4)} holds for smaller values of $\tau$, but for $\tau>\tau_*\approx 0.354$ there exists a subinterval of $(-2/3, 2/3)$ where ${\bf (H4)}$ is violated,
see Fig. \ref{fig:Rates}.
\begin{figure}[h]
	\centering
		\subfigure[$\tau=0.3$  ]{\includegraphics[width=.32\textwidth]{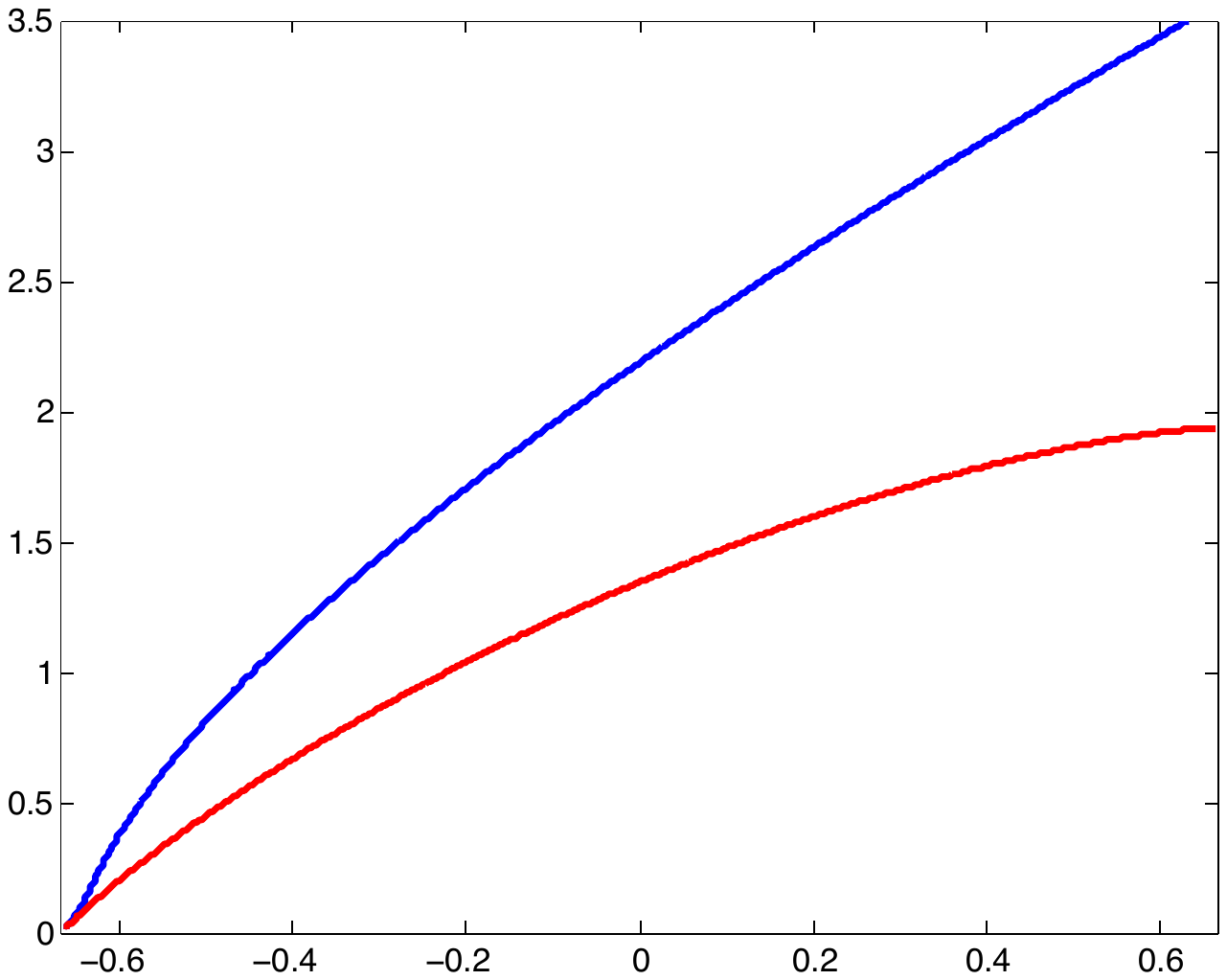}}
		\subfigure[$\tau=0.354$]{\includegraphics[width=.32\textwidth]{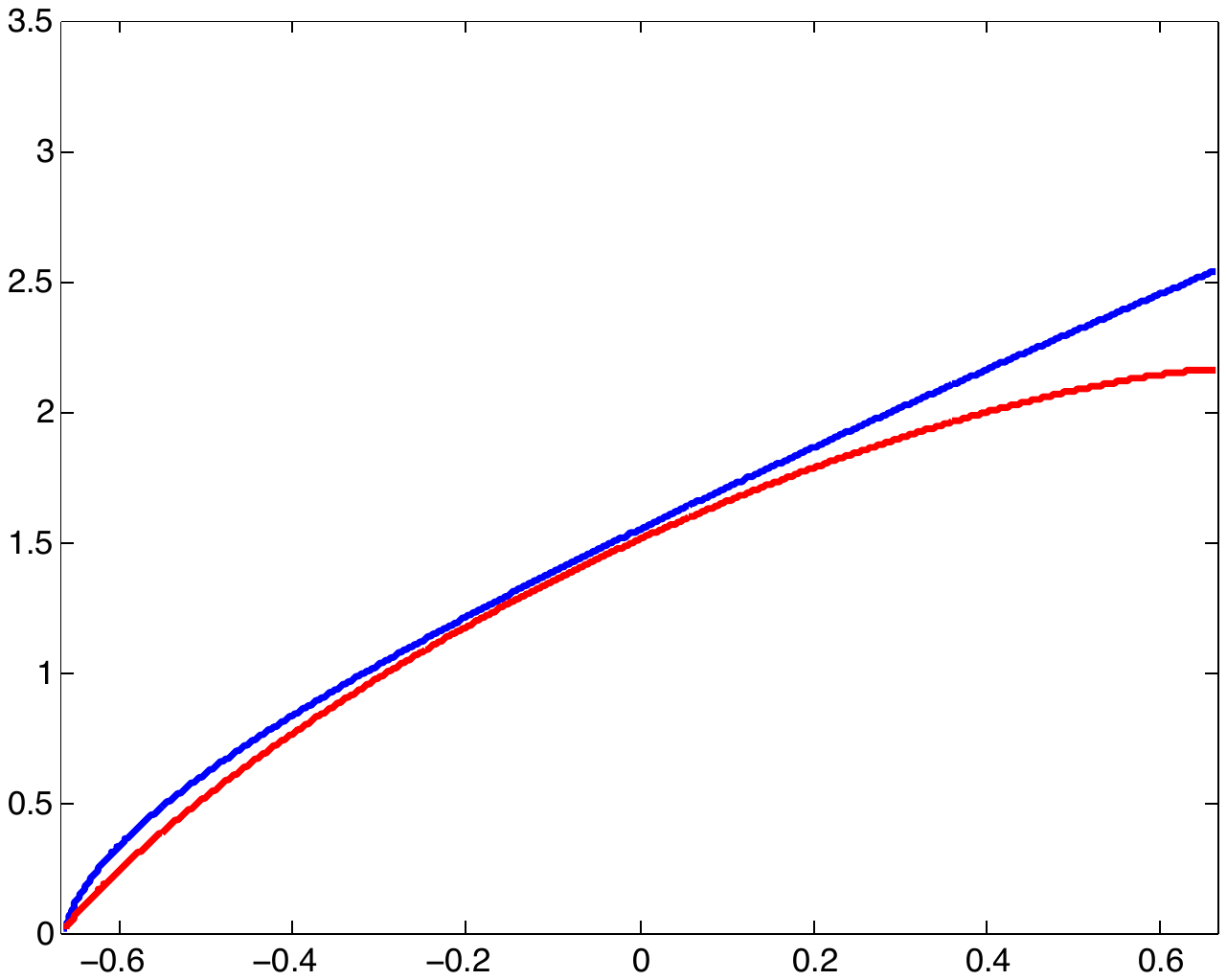}}
		\subfigure[$\tau=0.37$ ]{\includegraphics[width=.32\textwidth]{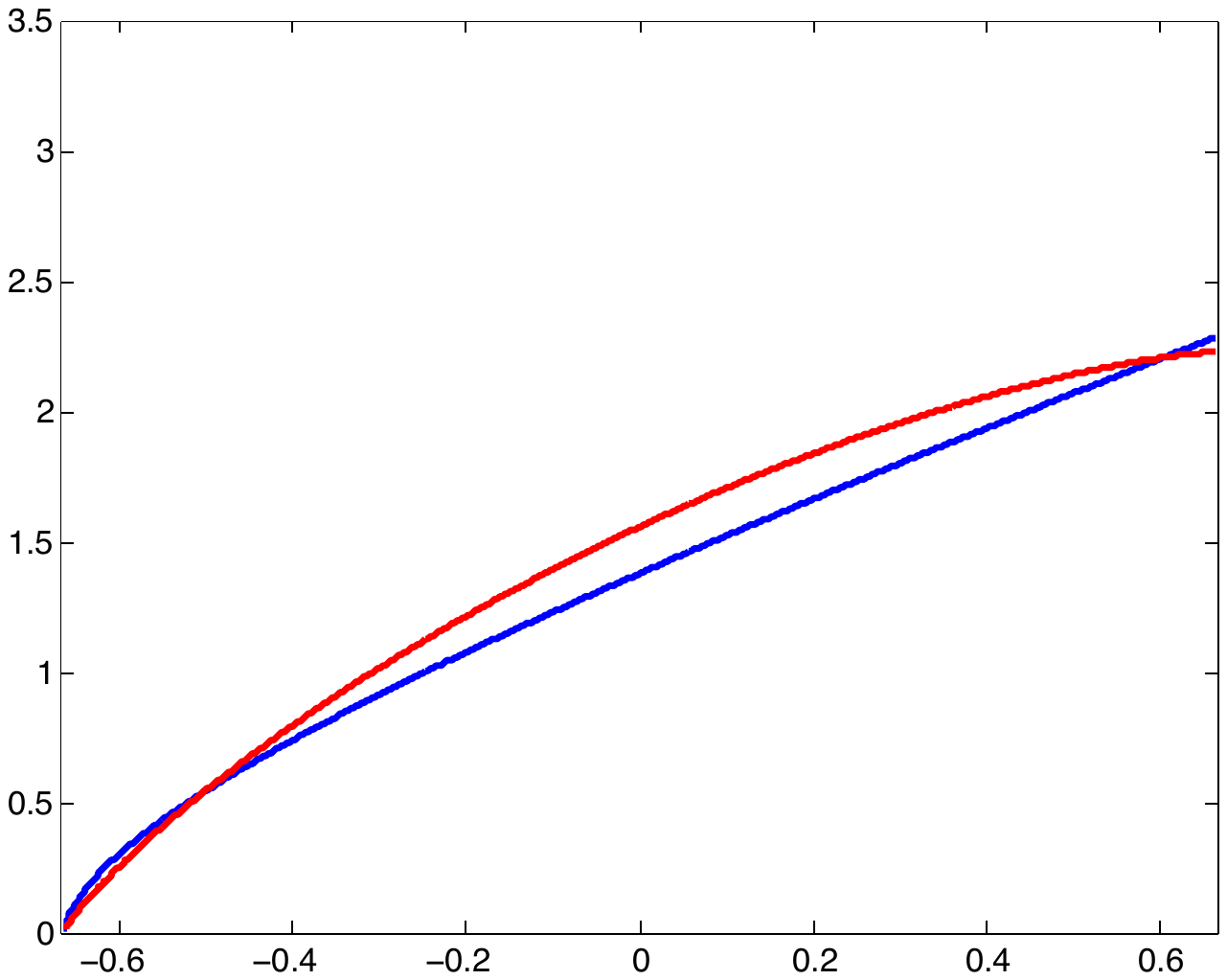}}
	\caption{Comparison between the rates $R_{n,+}(y_*)$ (blue curve) and $R_p(y_*)$ (red curve) for $J=2$ and three values of $\tau$. (a) {\bf (H4)} holds for $\tau=0.3$ (b) The limiting case, $\tau=0.354$, where $R_{n,+}(y_*)=R_p(y_*)$
	appear to be equal at one point. (c) {\bf (H4)} no longer holds for $\tau = 0.37$.}
	\label{fig:Rates}
\end{figure}

It follows from Theorem \ref{th-canoh} that for $\tau<\tau_*$ there exists a canard explosion analogous to the one occurring in the classical van de Pol
system. However, for $\tau>\tau_*$ this version of canard explosion may no longer be present. There is still a weak version of canard explosion. Indeed, canards with `large head' must exist and be stable. Moreover, canard explosion occurs and is locally supercritical (as shown above). All canard cycles 
are exponentially close in the parameter space to a segment of the canard solution
and therefore the locus of the canard cycle is exponentially close to that of the maximal canard (see Remark \ref{rem-width}). 
It is very difficult to determine numerically whether complex dynamics occurs  during a canard transition. Partial verification 
is provided by period doubling cascades of small amplitude cycles, found for $\tau\approx 0.4$ (see Fig~\ref{fig:PeriodDoublings}). Such period doubling cascades can no longer
be observed as $\eps\to 0$, but it is possible that they turn into period doubling cascades of canards when $\eps$ is  sufficiently small.
\begin{figure}[h]
	\centering
		\subfigure[Cycles]{\includegraphics[width=.48\textwidth]{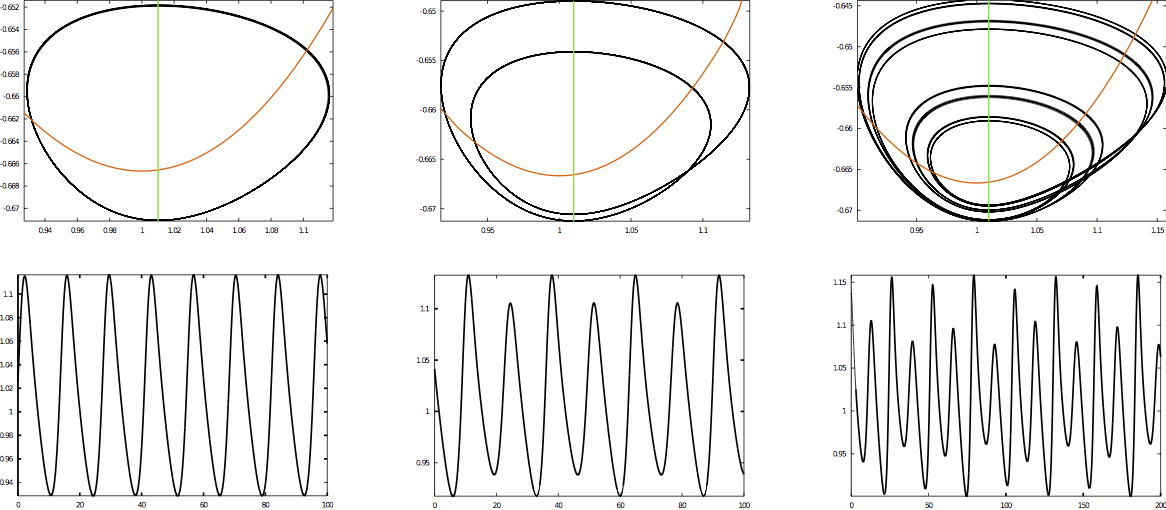}}
		\subfigure[Chaotic Orbit]{\includegraphics[width=.48\textwidth]{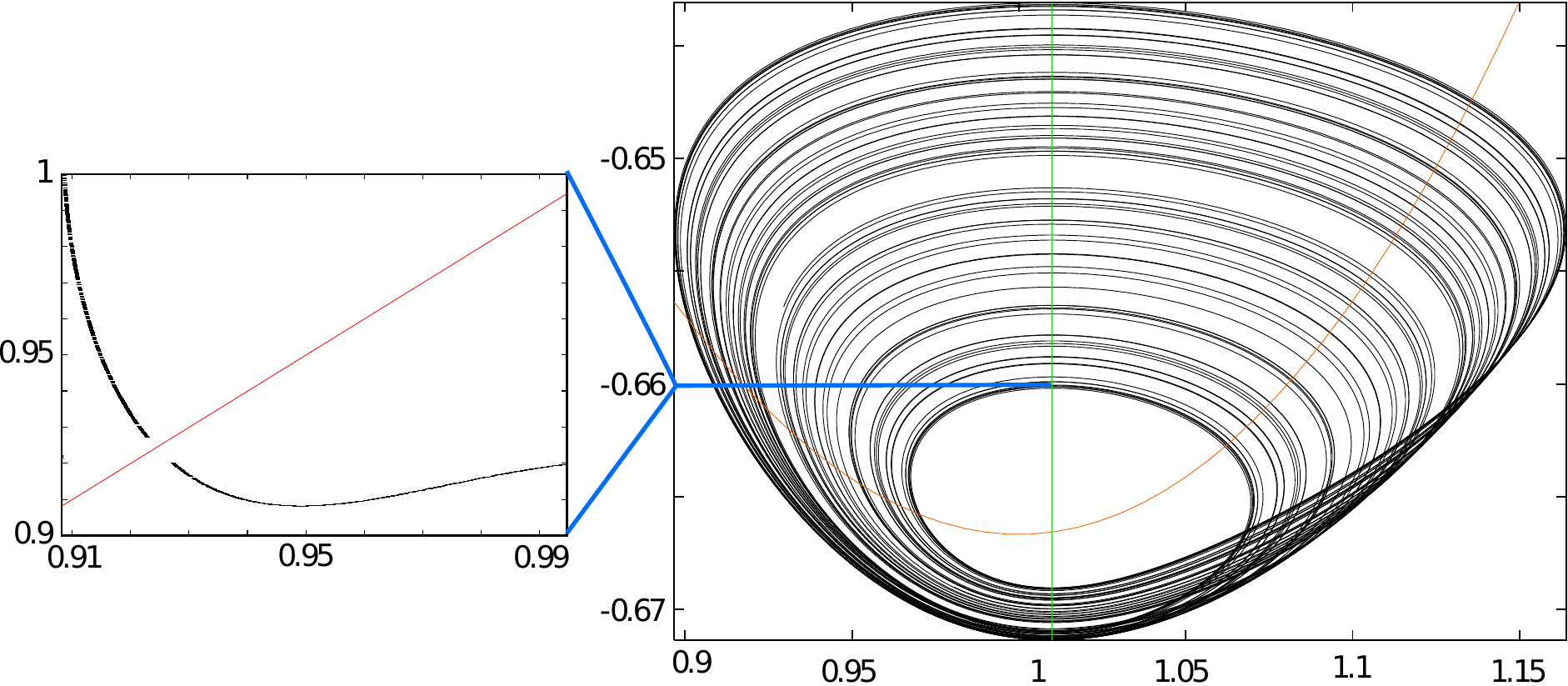}\label{fig:ChaosPeriodDoubling}}
			\caption{Simulation of the delayed van der Pol equation for $a=1.01$, and $\varepsilon$ was increased to $0.05$ in order to follow the phenomenon.  (a): $\tau=0.4$, $\tau=0.401$ and $\tau=0.408$. The cycle arising from the Hopf bifurcation, after a few period doubling bifurcations, shows a chaotic profile $\tau=0.41$ (b), as illustrated by the Ruelle plot (left) of the permanent dynamics on a Poincar\'e section (blue line at $w=-0.66$).}
	\label{fig:PeriodDoublings}
\end{figure}

The parameter point $(a, \tau)=(1, 1/J)$, it is shown~\cite{krupa-touboul:14b} that the system undergoes a generic subcritical Bogdanov-Takens (BT) bifurcation. The proof proceeds by center reduction manifold. It is however easy to check that this point has the typical BT degeneracy, noting that the two-dimensional vector space of affine functions belongs to the nullspace of the linearized operator at this point $\mathcal{L} u =J(u_t-u_{t-\tau})$, and moreover one notes that the linearized operator maps $t\mapsto \beta t$ on the constant function $\beta$, ensuring that the linearized equation has the typical shape of the BT singularity. As a result, for $\tau>1/J$, there is a Hopf bifurcation of the fast system on $S_+$, so that \textbf{(H2)} no longer holds for $J\tau \geq 1$.

\begin{figure}
	\centering
		\includegraphics[width=.5\textwidth]{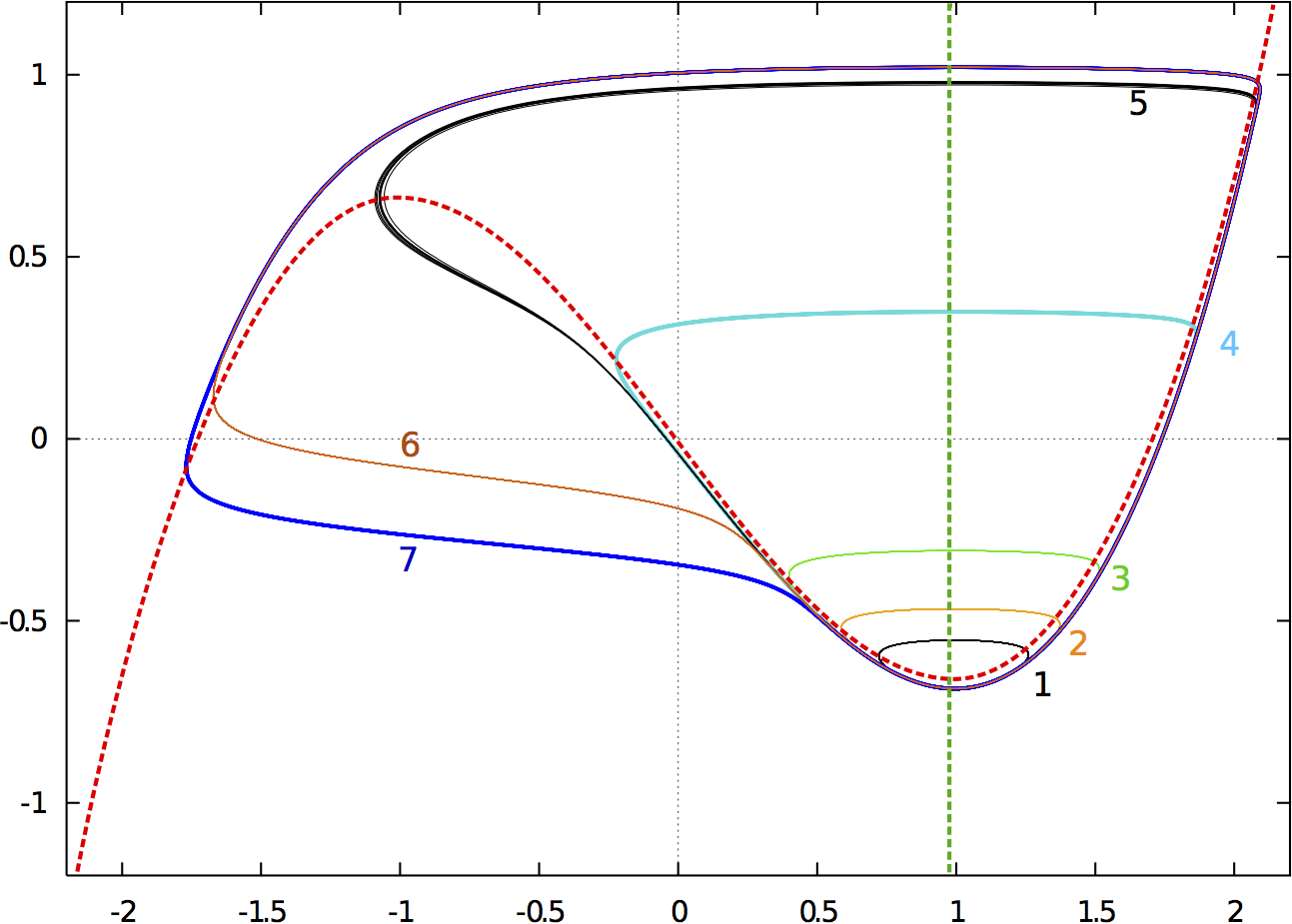}
	\caption{Delay-induced canards in the delayed van der Pol system. $\varepsilon=0.05$, $a=0.995$. Dotted lines correspond to the nullclines (red: $x$ nullcline, green: $y$ nullcline) and the curves represent trajectories in the phase plane $(x,y)$ for different values of the delay. 1. $\tau=0.01$, 2. $\tau=0.07$, 3. $\tau=0.085$, 4.$\tau=0.08951569008$, 5.$\tau=0.08951569009$, 6. $\tau=0.089516$, 7. $\tau=0.0896$.}
	\label{fig:fullCanard}
\end{figure}
\section{Discussion}
In this paper, we have extended the theory of canard explosions to a class of delayed differential equations, and applied the theory to a delayed version of the van der Pol oscillator. We proved that for a wide range of delays, the canard explosion is similar to the case of a system with one slow and one fast variables. However, as delays are increased, several phenomena may occur. We have shown that delays can induce a destabilization the family of small canard cycles. Moreover, delays may destabilize the stable branches of the critical manifold preventing global canard explosion from occurring. For instance in the delayed vdP system, such a phenomenon arises as delays are sufficiently large, through a Bogdanov-Takens bifurcation. In that regime, complex oscillatory patterns arise, and will be further analyzed in~\cite{krupa-touboul:14b}. Our work therefore extends the results demonstrated in~\cite{campbell-etal:09,stone2004stability} to equations with non-small delays or distributed delays. 

This study opens the way to the analysis of canard phenomena in delay equations. In particular, we expect to find interesting dynamical phenomena in systems with more slow directions and delays, where small subthreshold oscillations due to folded singularities interact with the oscillatory instability induced by the delay. In such systems, delay-induced MMOs or bursting shall emerge, mediated by the presence of canard solutions. 

We eventually note that such intricate oscillatory patterns may also emerge from instabilities in the fast equation with just one slow variable. This is in particular the case of the delayed vdP system which was shown to undergo canard explosion in section~\ref{sec:CanardvdP}. Indeed, as delays are increased to $\tau>1/J$, the equilibria of the fast equation loose stability, leading the system to regimes of complex dynamics including Mixed Mode Oscillations, bursting and chaos. We show such solutions in Fig.~\ref{fig:ComplexDynamics}. These are further investigated in~\cite{krupa-touboul:14b}. 

\begin{figure}[htbp]
	\centering
		\subfigure[$\tau=0.4$: small oscillations]{\includegraphics[width=.3\textwidth]{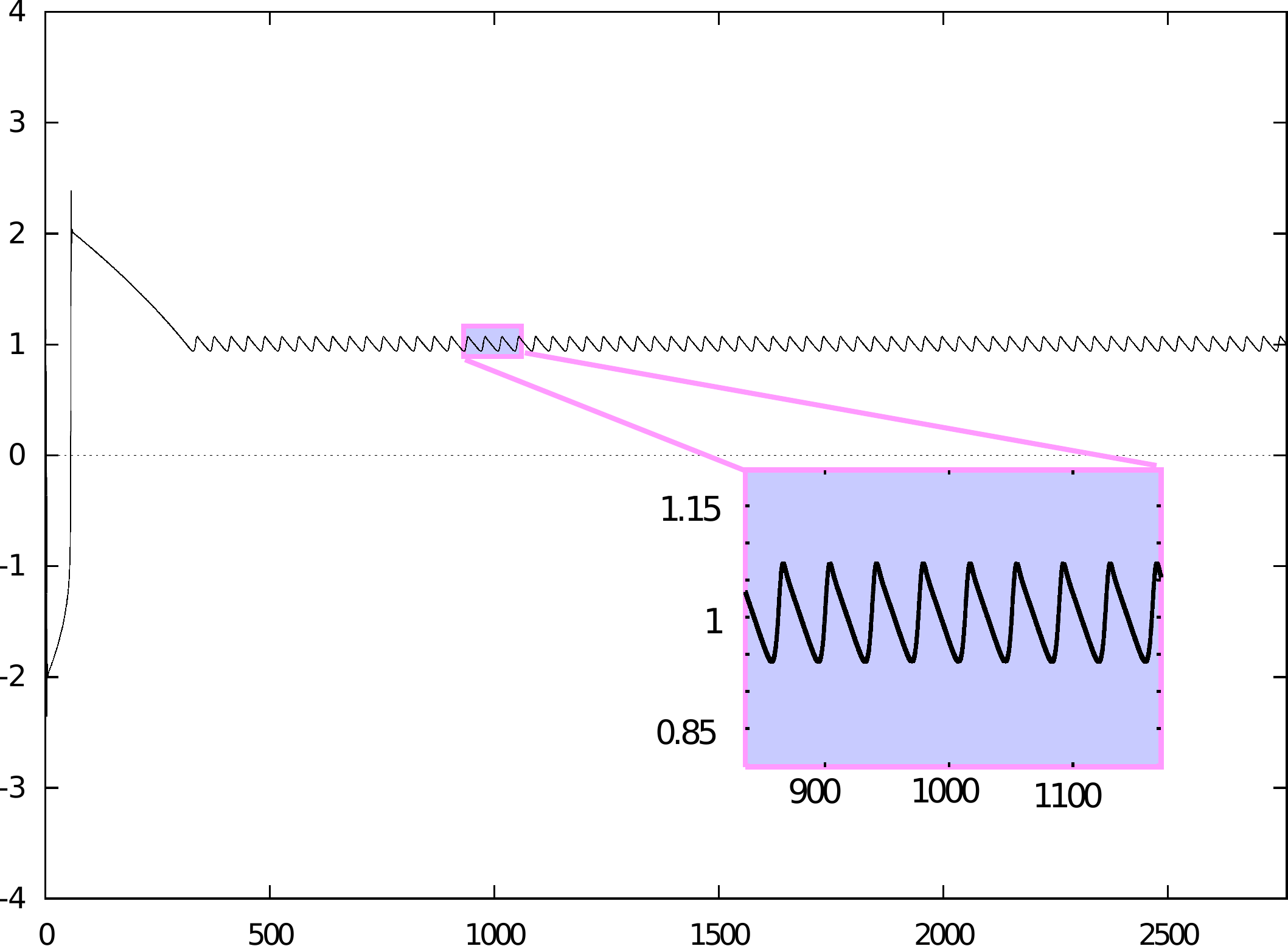}}
		\subfigure[$\tau=0.45$: MMOs]{\includegraphics[width=.3\textwidth]{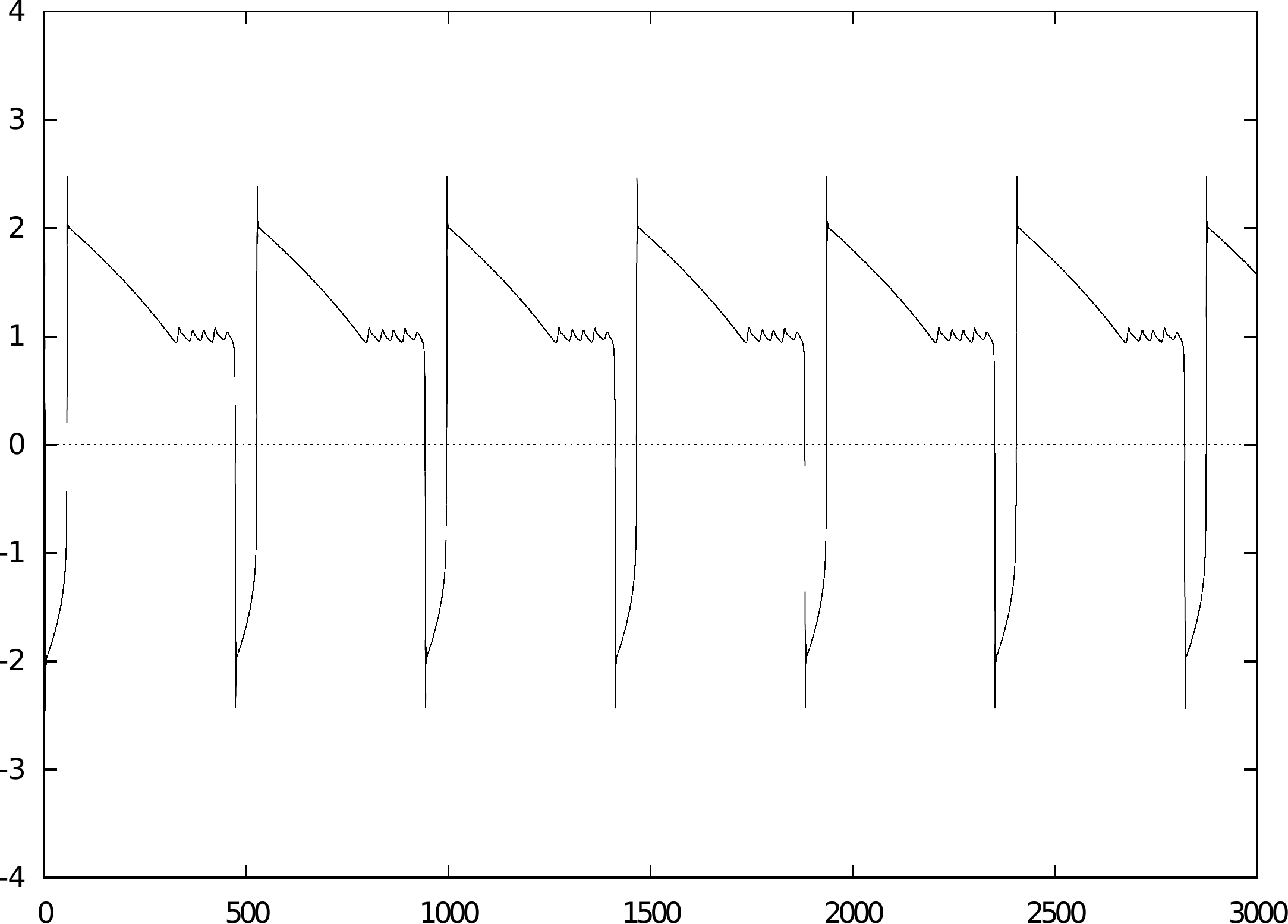}}
		\subfigure[$\tau=1$: Bursts]{\includegraphics[width=.3\textwidth]{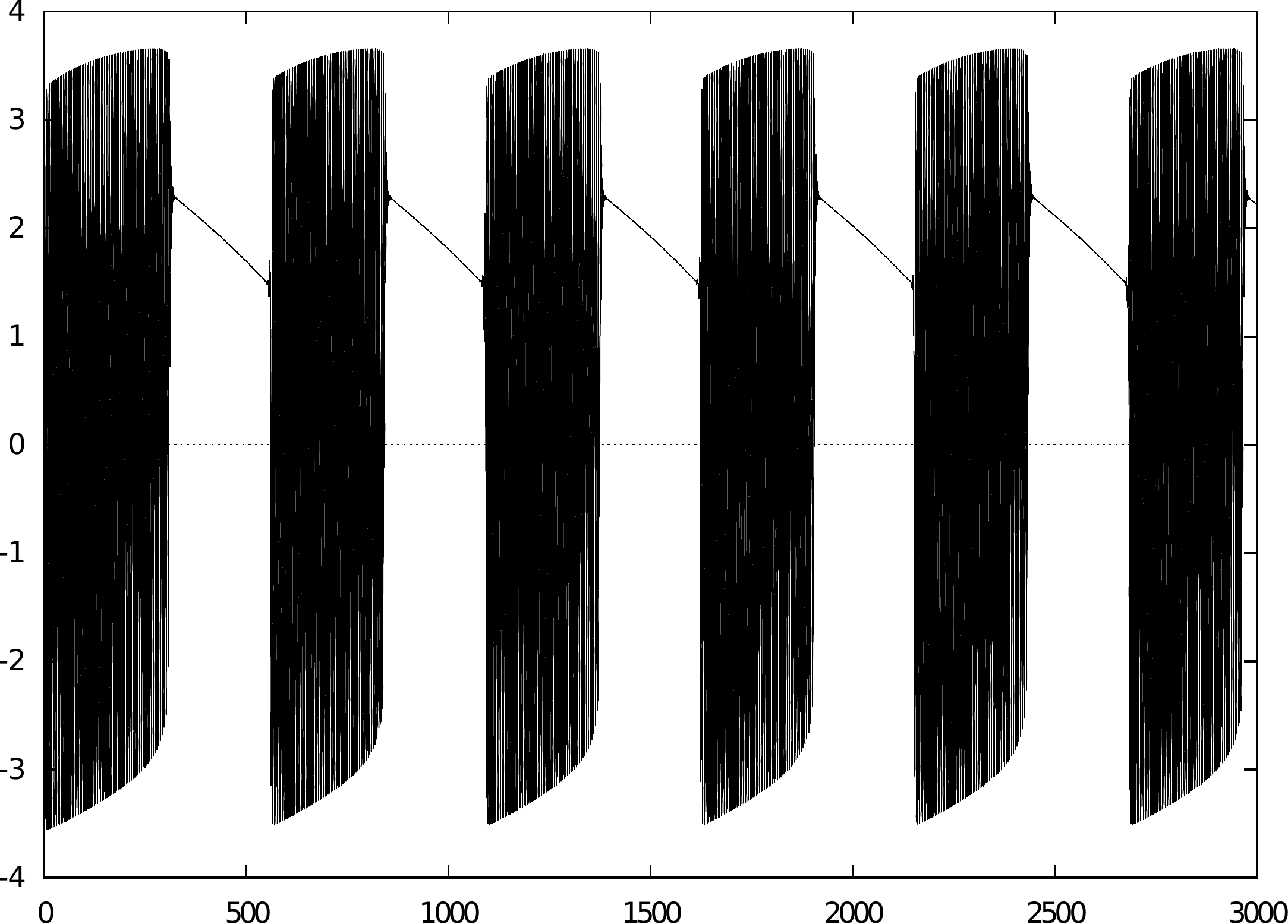}}
	\caption{Simulations of the delayed vdP system for $a=1$, $J=2$ and different values of $\tau$. After the canard explosion, complex oscillations arise, including Mixed-Mode oscillations and bursting. }
	\label{fig:ComplexDynamics}
\end{figure}
\appendix
\section{Fenichel theory for delayed differential equations}\label{append:fenichel}
We present here a sketch of the proof of theorem~\ref{thm-fen}. For simplicity we assume that $g(0,y,0)\neq 0$ for $y_0\le y\le y_1$, and without loss of generality assume $g(0,y,0)> 0$
(this is the only case we need in our application). 
By modifying the slow flow we can make sure that there
exist equilibrium points 
$y_{e0}<y_0$ and $y_{1e}>y_1$
such that  the hypothesis (H1) still holds, the slow flow is unchanged
on $[y_0, y_1]$ and there is a solution $\xi(t)$
of 
\begin{equation}\label{eq-slow}
\dot y=g(0,y,\eps).
\end{equation}
The linear problem for \eqref{eq-delsf} is as follows:
\begin{align}\label{eq-delsflin}
\begin{split}
x'=& \int_0^h d\zeta(y, \tau) x(t-\tau)d\tau,\\
y'=&\eps g(0, y).
\end{split}
\end{align}
Using the solution $\xi(t)$ we get the following equivalent formulation of \eqref{eq-delsflin}
\begin{equation}\label{eq-lin1}
v'= \int_0^h d\zeta(\xi(\tau), \tau) v(t-\tau)d\tau
\end{equation}
and let $T(t)$ be the solution operator.
Let 
\begin{align}\label{eq-xpxm}
\begin{split}
&X^-=\{v\in BC_\eta(\R; X))\; :\; \lim_{t\to \infty} e^{-\eta t} \|T(t)v\| = 0\},\quad X^-(\tau)=\{v(\tau)\; :\; v\in X^-\}\\
&X^+=\{v\in BC_\eta(\R; X))\; :\; \lim_{t\to -\infty} e^{\eta t} \|T(t)v\| = 0\},\quad X^+(\tau)=\{v(\tau)\; :\; v\in X^+\}.
\end{split}
\end{align}
Let $P_-(\tau)$ qnd $P_+(\tau)$ be projections onto the spaces $X^-(\tau)$ and $X^+(\tau)$ 
with kernel $X^+(\tau)$ and $X^-(\tau)$, respectively.  Consider 
\begin{align}\label{eq-delsflintilda}
\begin{split}
x'=& \int_0^h d\zeta(y, \tau,\eps) x(t-\tau)d\tau+\eps\tilde{f}(y),\\
y'=&\eps g(0, y)
\end{split}
\end{align}
where $\tilde{f}$ is defined by $f(0,y,\eps)=f(0,y,0)+\eps\tilde{f}(y)$. We also define the operator ${\mathbf r}$ which assigns to a an element of $X$ its value at $0$.
Further, we modify ${\mathbf r}$, replacing it with ${\mathbf r}_{\rm mod}$, to ensure that~\eqref{eq-delsf} is equal to~\eqref{eq-delsflintilda}
outside a small neighborhood of $S_0=\{(0,y),\; y\in [y_{0e}, y_{1e}]\}$,
see \cite{diekmann1995delay} for details of a similar construction in the context of the center manifold theorem.
Finally, we define the operator ${\mathcal F}=({\mathcal F}_f,{\mathcal F}_s)$ which
maps the space $BC_\eta(\R; X\times\R)$ into $BC_\eta(\R; X\times\R)$ ($\eta$ has to be chosen within the
spectral gap), given by
\begin{align}\label{eq-opsf}
\begin{split}
{\mathcal F}_f(\psi(t),\phi(t)))=
&T(t)\psi(0) +\int_{-\infty}^t T(t-\tau)P_-(\tau){\mathbf r}_{\rm mod}(f(\psi(\tau),\phi(\tau),\eps))d\tau\\
&+\int_{\infty}^t T(t-\tau)P_+(\tau) {\mathbf r}_{\rm mod}(f(\psi(\tau),\phi(\tau),\eps))d\tau\\
{\mathcal F}_s(\psi(t),\phi(t))=&\phi(0)+\eps\int_0^t {\mathbf r}_{\rm mod}(g(\psi(\tau),\phi(\tau),\eps))d\tau.
\end{split}
\end{align}
For each choice of ${\mathbf r}_{\rm mod}$ there exists a unique fixed point of ${\mathcal F}$ corresponding
to a solution of \eqref{eq-delsf} which defines a slow manifold.

To construct $W^u(S_\eps)$ we use a different version of the operator ${\mathcal F}$,
now defined on  $BC_\eta(\R_-; X\times\R)$ into $BC_\eta(\R_-; X\times\R)$, where $\R_-$
are non-positive real numbers. First of all, 
we note that the space $X_+$ is finite, see Chapter IV in \cite{diekmann1995delay}.
Let $k={\rm dim}\, X_+$.
The fixed point equation
\begin{equation}\label{eq-fp}
(\psi,\phi)={\mathcal F}(\psi(t),\phi(t))
\end{equation}
defines a $k$ dimensional submanifold of $BC_\eta(\R_-; X\times\R)$, for any $\eta>0$ within the spectral gap. 
The elements of this manifold define solutions of \eqref{eq-delsf} on $\R$.
The manifold $W^u(S_\eps)$ is obtained as the union of these solutions.
Smoothness of $S_{\eps}$ or $W^u(S_\eps)$ is proved analogously as in~\cite{diekmann1995delay}. 

\section{Stone-Campbell small delay expansion}\label{append:Campbell}
In the whole manuscript, we have worked with arbitrary delays. General analysis was provided for canard explosions, and an analysis of the delayed van der Pol equation was provided and showed a vast repertoire of behaviors above $\tau=1/J$. In~\cite{stone2004stability}, Stone, Campbell and Erneux proposed a method in order to characterize canard explosions for delayed equations in the limit of small delays. In that regime, one may use the perturbation result of Chicone~\cite{chicone:03} showing, in our case of the delayed vdP system, that the system has a two-dimensional inertial manifold for $\tau$ sufficiently small. On this manifold, and in the limit of small delays, the term $x(t)-x(t-\tau)$ is well approximated by $\tau x'(t) -\tau^2/2 x''(t)+O(\tau^3)$. The method consists in using the fact that equation~\eqref{eq:vdpdelay} can be approximated at first order in $\tau$ by the solutions of the following ordinary differential equation:
\[\begin{cases}
	(1-J\tau)x' = x-\frac{x^3}{3} +y\\
	y' = \varepsilon (a-x)
\end{cases}\]
which can be written, through the change of time $\theta=t/(1-J\tau)$:
\begin{equation}\label{eq:SmallDelay}
\begin{cases}
		\der{x}{\theta} = x-\frac{x^3}{3} +y\\
		\der{y}{\theta} = \varepsilon(1-J\tau) (a-x)
	\end{cases}	
\end{equation}
which precisely corresponds to the non-delayed van der Pol equation with a modified slow timescale $\tilde{\varepsilon}(\tau)=\varepsilon(1-J\tau)$. Classical methods from the ODE domain can thus be applied in order to show that a canard explosion occurs, and to provide an approximate formula for the canard point. This follows classical theory that we review here in the context of the vdP equation. Under a few geometrical conditions~\cite{krupa-szmolyan:01}, a slow-fast dynamical system generically presents canard explosion. These results can be summarized as follows. We consider a two-dimensional slow-fast system of type:
\[
\begin{cases}
	\varepsilon \dot{x}=f(x,y,\lambda,\varepsilon)\\
	\dot{y}=g(x,y,\varepsilon)
\end{cases}
\]
where $\lambda\in(-\lambda_0,\lambda_0)$ is a parameter, and make the following assumptions:
\renewcommand{\theenumi}{A\arabic{enumi}}
\begin{enumerate}
	\item The critical manifold $\Sigma=\{x,y: f(x,y,\lambda,0)=0\}$ is S-shaped for all $\lambda$, i.e. can be written as $y=\varphi_{\lambda}(x)$, and $\varphi_{\lambda}$ has exactly one non-degenerate minimum $x_l(\lambda)$ and maximum $x_r(\lambda)$.
	\item The submanifolds $S_l=\Sigma \cap \{x< x_l\}$ and $S_l=\Sigma \cap \{x> x_r\}$ are attracting ($\partial f/\partial x <0$) and $S_m=\Sigma\cap\{x_l<x<x_r\}$ is repulsive ($\partial f/\partial x >0$) for the layer problem
	\item Both folds are generic for $\lambda\neq 0$, i.e. for $x^*=x_l$ or $x_r,$
	\[\frac{\partial^2 f}{\partial x^2}(x^*, \varphi(x^*),\lambda,0)\neq 0 \qquad \frac{\partial f}{\partial x}(x^*, \varphi(x^*),\lambda,0)\neq 0 \qquad g(x^*,\varphi(x^*),\lambda,0)\neq 0\]
	and for $\lambda=0$, one of the folds is a non-degenerate canard point, i.e. satisfies the two first differential conditions of the fold and 
	\[\frac{\partial g}{\partial x}(x^*, \varphi(x^*),0,0)\neq 0 \qquad \frac{\partial g}{\partial \lambda}(x^*, \varphi(x^*),0,0)\neq 0\]
	\item When $\lambda=0$, the slow flow on $\Sigma$, namely $g(x,\varphi_0(x),0,0)/\varphi_0'(x)$, is strictly positive on $S_l\cup S_m\cup \{x_r\}$ and strictly negative on $S_r$. 
\end{enumerate}
Then for $\varepsilon$ and $\lambda$ sufficiently small, the system has a unique equilibrium converging to the canard point as $(\varepsilon,\lambda)\to 0$, and this point looses stability as $\lambda$ is increased through a Hopf bifurcation. The small cycles arising from the Hopf bifurcation (canard cycles) joins relaxation oscillations within an exponentially small interval of $\lambda$ of order $O(e^{-K/\varepsilon})$.

It is very easy to see that these conditions readily apply to the case of the non-delayed van der Pol equation, implying the existence of a canard explosion as a function of the parameter $a$. This is also true of the small delay ODE~\eqref{eq:SmallDelay}. Indeed, $\tilde{\varepsilon}(0)=\varepsilon$, and for $J>0$ as assumed here, the function $\tau\mapsto \varepsilon(\tau)$ is non-increasing. This type of variation of the parameter is not usual in the analysis of the van der Pol equation, since our delay parameter acts precisely on the timescale of the slow variable. In order to follow blow-up method used in~\cite{krupa2001extending,krupa-szmolyan:01}, we shall define $\tilde{x}=-(x-1)$ and $\tilde{y}=(y-2/3)$. These variables satisfy the equations:
\[\begin{cases}
	\der{\tilde{x}}{\theta} = \tilde{x}^2-\frac{\tilde{x}^3}{3} -\tilde{y}\\
	\der{\tilde{y}}{\theta} = \tilde{\varepsilon} (\tilde{x}-\tilde{a})
\end{cases}\]
with $\tilde{a}=1-a$. It is then trivial to reduce it to canonical form~\cite[Section 3.1]{krupa2001extending}:
\[\begin{cases}
	\der{\tilde{x}}{\theta} = -\tilde{y} h_1 + \tilde{x}^2 h_2(\tilde{x}) + \varepsilon h_3\\
	\der{\tilde{y}}{\theta} = \tilde{\varepsilon}(x\,h_4 -\lambda h_5+y h_6)
\end{cases}\]
with $h_1=h_4=-h_5=1$, $h_2=1-x/3$ and $h_3=0$, compute the coefficients 
\[a_1=\derpart{h_3}{x}=0, \quad a_2=\derpart{h_1}{x}=0, \quad a_3=\derpart{h_2}{x}=-\frac 1 3, \quad a_4=\derpart{h_4}{x}=0, \quad a_5=h_6=0\]
and we therefore conclude that the Hopf bifurcation arises at $\tilde{a}_H(\varepsilon,\tau)=O(\tilde{\varepsilon}^2)$, this bifurcation is supercritical, and the maximal canard appears at 
\[\tilde{a}_c(\varepsilon,\tau)=\frac{\tilde{\varepsilon}}{ 8} + O(\tilde{\varepsilon}^2)\sim \frac{\varepsilon(1-J\tau)}{8 }.\]
Let us now consider the system with fixed $\tilde{a}>0$ small enough so that $\tilde{a}_H(\varepsilon, 0)<\tilde{a}<\tilde{a}_c(\varepsilon, 0)$. In that case, the system with $\tau=0$ present small canard oscillations. As $\tau$ is increased, the value of the effective parameter $\tilde{a}_c(\varepsilon,\tau)$ decreases. Increasing $\tau$ with fixed $\tilde{a}$ will hence induce a canard explosion. Simulations of the ODE system for small delay, provided in Fig.~\ref{fig:DelayInducedCanards}(a), indeed show that canard explosion as a function of the delays. This diagram is compared to simulations of the original delayed van der Pol system and the same qualitative scenario arises. However, for our choice of parameters, the delay corresponding to the bifurcation in the approximated ODE, close to $0.11$, is not very small and therefore it happens to slightly different from the value corresponding to the delay differential equation (close to $0.09$). Decreasing the value of $\varepsilon$ and taking $a$ closer to $1$ reduces the value of the $\tau$ corresponding to the bifurcation, making it closer from the simulations of the actual system\footnote{We chose the parameters for Fig.~\ref{fig:DelayInducedCanards} because it allows more flexibility for illustrating the phenomena (the canard explosion arises on a broader interval of values for $\tau$).}.

\begin{figure}
	\centering
		\includegraphics[width=.5\textwidth]{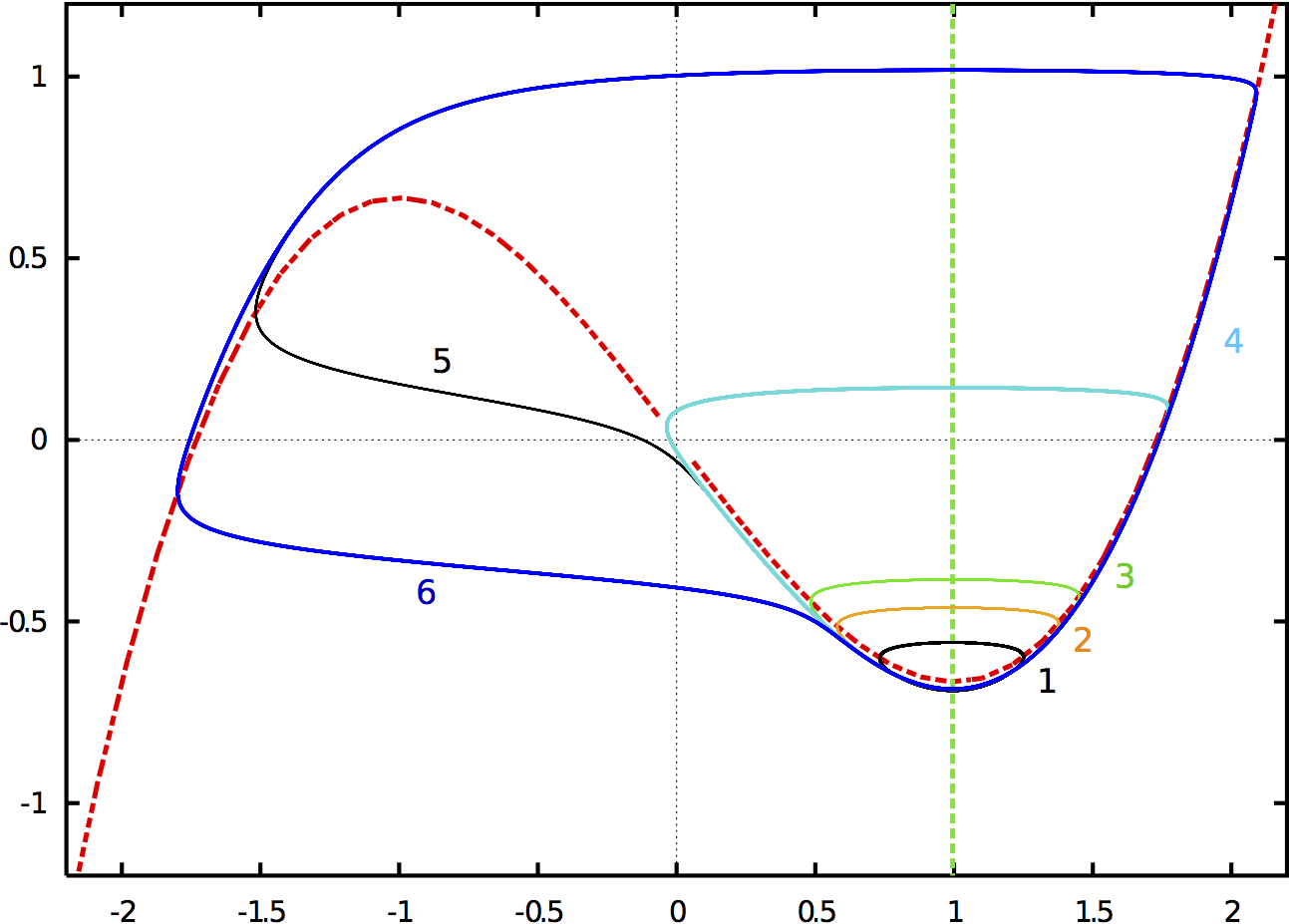}
	\caption{Delay-induced canards in small delay approximated ODE (compare with Fig.~\ref{fig:fullCanard}). $\varepsilon=0.05$, $a=0.995$. Dotted lines correspond to the nullclines (red: $x$ nullcline, green: $y$ nullcline) and the curves represent trajectories in the phase plane $(x,y)$ for different values of the delay. 1. $\tau=0.01$, 2. $\tau=0.11$, 3. $\tau=0.112$, 4.$\tau=0.112167578$, 5.$\tau=0.112167579$, 6. $\tau=0.1122$.}
	\label{fig:DelayInducedCanards}
\end{figure}

The simulations of the original delayed van der Pol equations~\eqref{eq:vdpdelay} for this set of parameters shows a very clear delay-induced canard explosion, as shown in Fig.~\ref{fig:DelayInducedCanards}: for values of $a$ smaller than $1$ and no delay, the system presents small oscillations corresponding to the presence of the Hopf bifurcation for $\tau=0$ and $a=1$. When increasing the value of the delay, the amplitude of this small cycle suddenly becomes very large, corresponding to relaxation oscillations. However, these cycles depart from the actual system and non-perturbative analysis in the delay is necessary in order to uncover these phenomena, as provided in the main text. This is even more true for $a>1$, in which case canard explosion arise for relatively large delays.




\end{document}